\documentclass[12pt]{amsart}

\usepackage{graphicx}
\newtheorem{theorem}{Theorem}[section]
\newtheorem{corollary}[theorem]{Corollary}
\newtheorem{remark}[theorem]{Remark}
\newtheorem{proposition}[theorem]{Proposition}
\newtheorem{lemma}[theorem]{Lemma}

\newtheorem{definition}[theorem]{Definition}
\newtheorem{example}[theorem]{Example}

\begin{document}

\title[On Yamada polynomial of spatial graphs]{On Yamada polynomial of spatial graphs obtained by edge replacements}

\author{Miaowang LI}
\address{School of Mathematical Sciences, Dalian University of Technology, Dalian 116024, P. R. China}
\email {limiaowang@yeah.net}

\author{Fengchun LEI}
\address{School of Mathematical Sciences, Dalian University of Technology, Dalian 116024, P. R. China}
\email{fclei@dlut.edu.cn}

\author{Fengling LI}
\address{School of Mathematical Sciences, Dalian University of Technology, Dalian 116024, P. R. China}
\email{dutlfl@163.com}

\author{Andrei VESNIN}
\address{Novosibirsk State University and Sobolev Institute of Mathematics Novosibirsk, 630090, Russia}
 \email{vesnin@math.nsc.ru}

\thanks{
The second author is supported in part by grants ( No.11329101 and No.11431009 ) of NSFC; the third author is supported in part by grants ( No.11671064 and No.11471151 ) of NSFC; the fourth author is supported by the Laboratory of Topology and Dynamics, Novosibirsk State University (a grant no. 14.Y26.31.0025 of the government of the Russian Federation) and a grant RFBR-16-01-00414.}

\subjclass[2010]{Primary 57M15; Secondary 05C31}

\keywords{Yamada polynomial, spatial graph, chain polynomial}

\begin{abstract}
We present formulae for computing the Yamada polynomial of spatial graphs obtained by replacing edges of plane graphs, such as cycle-graphs, theta-graphs, and bouquet-graphs, by spatial parts. As a corollary, it is shown that zeros of Yamada polynomials of some series of spatial graphs are dense in a certain region in the complex plane, described by a system of inequalities. Also, the relation between Yamada polynomial of graphs and the chain polynomial of edge-labelled graphs is obtained.
\end{abstract}

\maketitle

\section{Introduction}

Spatial graph theory on intrinsic knotting and linking of graphs in $S^3$ developed in the 1980s since J.H.~Conway and C.~Gordon~\cite{CC} proved that any embedding of the complete graph $\mathbf K_7$ in $\mathbb R^3$ contains a knotted cycle and any embedding of the complete graph $\mathbf K_{6}$ in $\mathbb R^{3}$ contains a pair of linked cycles. Being motivated by problems on knotting and linking of DNA and chemical compounds, the study of spatial graphs is in the center of interest for last decades. Thus, J.~Simon~\cite{S} discussed the chirality of an embedding of the complete graph $\mathbf K_5$, which answered a question raised by D.M.~Walba~\cite{W}. C.~Ernst and D.W.~Sumners~\cite{ES} introduced tangle theory of site specific recombination and gave an approach to study the behavior of DNA. X.-S.~Cheng, Y.~Lei and W.~Yang~\cite{CLY} considered invariants of double crossover links characterize topological properties of double crossover DNA polyhedra.

The modern study of spatial graphs and their generalizations combines topological and graph-theoretical methods. The powerful of polynomial invariants of knot as well as polynomial invariants for graphs was a natural motivation for investigation of polynomial invariants of spatial graphs initiated by L.H.~ Kauffman~\cite{k}. There are a number of invariants which connect spatial graphs and knots. Y.~Ohyama and K.~Taniyama~\cite{OT} explored relations among the Vassiliev invariants of knots contained in certain graphs. In 2015, N. Chbili~\cite{NC} gave criteria for a spatial graph to be (p,q)-lens graph. In 2017, A.~Henrich and L.H.~Kauffman~\cite{HK} provided a topological invariant for pseudoknots and four-valent rigid vertex spatial graphs.

It is well known~\cite{K1,K2} that the Alexander ideal and Alexander polynomial are invariants of spatial graphs which are determined by the fundamental groups of the complements of spatial graphs. In 1989, S.~Yamada~\cite{YA} introduced Yamada polynomial of spatial graphs in $R^3$, which can distinguish some non-isotopy spatial graphs with the same fundamental group. The Yamada polynomial is an concise and useful ambient isotopy invariant for graphs with maximal degree less than four. There are many interesting results on Yamada polynomial and its generalizations. J. Murakami~\cite{M} investigated the two-variable extension $\mathbf{Z}_S$ of the Yamada polynomial and gave an invariant related to the HOMFLY polynomial. In 1994, the crossing number of spatial graphs in terms of the reduced degree of Yamada polynomial has been studied by T.~Motohashi, Y.~Ohyama and K.~Taniyama~\cite{MO}. In 1996, A.~Dobrynin and A.~Vesnin~\cite{DV} studied properties of the Yamada polynomial of spatial graphs. For any graph $G$, V.~Vershinin and A.~Vesnin~\cite{VV} defined bigraded cohomology groups whose graded Euler characteristic is a multiple of the Yamada polynomial of $G$. A polynomial invariant of  virtual graphs was constructed by Y.~Miyazawa~\cite{MY} as an extension of the Yamada polynomial in 2006. Another invariant of spatial graphs associated with $U_{q}(sl(2,C))$ was introduced by S.~Yoshinaga~\cite{YO}. See~\cite{Kobe} about the relation between Yamada polynomial and Yoshinaga polynomial. Nice results on the structure of the Yamada and  flows polynomials of cubic graphs are established by I.~Agol and V.~Krushkal~\cite{AK}.

Zeros of polynomial invariants of knots and graphs is a question of special interest studied by A.D.~Sokal~\cite{SO} and  P.~Csikv\'ari, P.E.~Frenkel, J.~Hladk\'y, T.~Hubai~\cite{CFHH}  for chromatic polynomial; by O.T.~Dashbach, T.D.~Le, X.-S.~Lin~\cite{DLL} and  X.~Jun, F.~Zhang, F.~Dong, E.G.~Tay~\cite{XF} for Jones polynomial.

This paper consists of three parts. In the first part (Sections~2 and~3) we recall properties of Yamada polynomial of graphs and obtain some formulae for computing the Yamada polynomial of graphs by edge replacements via the chain polynomial (see Theorem~\ref{tger}). In the second part (Sections~4 and~5) we give formulae for computing the Yamada polynomial of spatial graphs obtained by replacing edges of  cycle graphs, theta-graphs, or bouquet graphs by spatial parts  (see Theorem~\ref{tcn}). In the last part, Section~6, we prove that zeros of the Yamada polynomial of spatial graphs are dense in a certain region in the complex plane, described by a system of inequalities (see Theorem~\ref{tmain3}).

\section{Yamada polynomial of a graph}

We consider a graph $G$, admitting loops and multiple edges. Let us use standard notations $p(G)$ and $q(G)$ for number of vertices and number of edges of it.

Before defining the Yamada polynomial of a graph, we recall a graph invariant which is a special case of the Negami polynomial invariant~\cite{N}.

\smallskip

\begin{definition}~\cite{YA}
Define 2-variable Laurent polynomial $h(G) = h(G) (x,y)$ of graph $G=(V,E)$, where $V = V(G)$ is the vertex set and $E = E(G)$ is the edge set of $G$, by the rule
$$
h(G)(x,y) = \sum_{F\subset E}(-x)^{-|F|} f(G-F),
$$
with $f(G)=x^{\mu(G)}y^{\beta(G)}$, where $\mu(G)$ and $\beta(G)$ is the number of connected components of $G$ and the first Betti number of $G$.
\end{definition}

\smallskip

\begin{definition}~\cite{YA}
{\it The Yamada polynomial} of a graph $G$ is a Laurent polynomial in one variable, obtained by the following substitution into $h(G)(x,y)$:
$$
H(G)(A) = h(G)(-1,-A-2-A^{-1}).
$$
\end{definition}

\smallskip

The following properties of $H(x,y)$ hold (see~\cite{YA} for details):
\begin{itemize}
\item[(1)] $H(\cdot)= -1$.
\item[(2)] Let $e$ be a non-loop edge of a graph $G$. Then $H(G)=H(G/e)+H(G-e)$, where $G/e$ is the graph obtained by contracting the edge $e$, and $G-e$ is the graph obtained by deleting of the edge $e$.
\item[(3)] Let $e$ be a loop edge of a graph $G$. Then $H(G)=-\sigma H(G-e)$, where $\sigma= A+1+A^{-1}$.
\item[(4)] Let $G_{1}\cup G_{2}$ be a disjoint union of graphs $G_{1}$and $G_{2}$, then $H(G_{1}\cup G_{2})= H(G_{1})H(G_{2})$.
\item[(5)] Let $G_{1}\cdot G_{2}$ be a union of graphs $G_{1}$ and $G_{2}$ having one common point, then
 \[H(G_{1}\cdot G_{2})= -H(G_{1})H(G_{2}).\]
\item[(6)] If $G$ has an isthmus, then  $H(G)= 0$.
\end{itemize}

It is easy to find directly (see also~\cite{DV}) polynomial $H(G)$ for some simple classes of graphs.

\smallskip

\begin{lemma}  \label{lemma-dv} The following properties holds with $\sigma = A + 1 + A^{-1}$.
\begin{itemize}
\item[(i)] Let $T_q$ be a tree with $q$ edges. Then $H(T_q)=0$ for all $q$.
\item[(ii)] Let $C_{n}$ be the cycle of length $n$. Then $H(C_{n})= \sigma$ for all $n$.
\item[(iii)] Let $B_q$ be the one-vertex graph with $q$ loops, also known as ``$q$-bouquet''. Then $H(B_q) \, = \,(-1)^{q-1} \sigma^q$.
\item[(iv)] Let $\Theta_s$ be the graph consisting of two vertices and $s$ edges between them, also known as ``$s$-theta-graph'' (see Fig.~\ref{fig1}). Then
$$
 H(\Theta_s) \, = \, \frac{1}{\sigma+1} [\sigma +(-\sigma)^s].
$$
\end{itemize}
\end{lemma}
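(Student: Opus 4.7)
\smallskip

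The plan is to derive all four parts from the deletion-contraction relations and multiplicative properties $(1)$--$(6)$ listed above, using induction on the number of edges.

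For part $(i)$, I would observe that every edge of a tree $T_q$ with $q\ge 1$ is an isthmus (bridge), so $H(T_q)=0$ is immediate from property $(6)$. For part $(ii)$, induction on $n$. The base case is $C_1$, which is a single loop on one vertex, so by property $(3)$ applied to $B_1$ we get $H(C_1)=-\sigma H(\cdot)=-\sigma\cdot(-1)=\sigma$. For $n\ge 2$ pick any (non-loop) edge $e$ of $C_n$. Then $C_n/e=C_{n-1}$ while $C_n-e$ is a path, hence a tree, so $H(C_n-e)=0$ by part $(i)$. Property $(2)$ then gives $H(C_n)=H(C_{n-1})+0=\sigma$ by induction.

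For part $(iii)$, I would iterate property $(3)$. Each loop edge of $B_q$ is a loop, so $H(B_q)=-\sigma H(B_{q-1})$, and inducting down from the base case $H(B_0)=H(\cdot)=-1$ yields $H(B_q)=(-\sigma)^q\cdot(-1)=(-1)^{q-1}\sigma^q$.

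Part $(iv)$ is the main step and will be done by induction on $s$, with deletion-contraction applied to one of the $s$ parallel edges $e$ of $\Theta_s$. Contracting $e$ identifies the two vertices and leaves the remaining $s-1$ parallel edges as loops, so $\Theta_s/e=B_{s-1}$; deleting $e$ gives $\Theta_s-e=\Theta_{s-1}$. Hence by property $(2)$ combined with part $(iii)$,
\begin{equation*}
H(\Theta_s)\;=\;H(\Theta_{s-1})+H(B_{s-1})\;=\;H(\Theta_{s-1})+(-1)^{s-2}\sigma^{s-1}.
\end{equation*}
Starting from $H(\Theta_1)=0$ (a single edge is a tree, so $(i)$ applies), this linear recurrence telescopes to $H(\Theta_s)=\sum_{k=2}^{s}(-1)^{k}\sigma^{k-1}=-\sum_{j=1}^{s-1}(-\sigma)^{j}$. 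The main obstacle is simply the algebra of summing this geometric series and putting it in the stated form; using the formula for a geometric sum with ratio $-\sigma$ yields
\begin{equation*}
H(\Theta_s)\;=\;\frac{\sigma-(-1)^{s-1}\sigma^{s}}{\sigma+1}\;=\;\frac{\sigma+(-\sigma)^{s}}{\sigma+1},
\end{equation*}
as required. One should also check consistency with $(ii)$ via $\Theta_2=C_2$, for which the formula gives $\sigma$.
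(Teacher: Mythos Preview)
Your proof is correct. The paper itself does not supply a proof of this lemma: it merely remarks that the formulae are ``easy to find directly (see also~\cite{DV})'' and states them without argument. Your derivation via properties $(1)$--$(6)$ and induction is exactly the kind of direct verification the authors had in mind, and your handling of part~(iv)---reducing $\Theta_s$ to the recurrence $H(\Theta_s)=H(\Theta_{s-1})+H(B_{s-1})$ and summing the resulting geometric series---is clean and complete.
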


\begin{figure}[!htb]
\centering
\scalebox{0.8}{\includegraphics{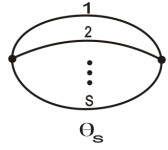}}
\caption{The graph $\Theta_{s}$.} \label{fig1}
\end{figure}

\smallskip

\begin{proposition}\label{ptv}
Let $G_{1}:G_{2}$ be the union of two graphs $G_{1}$ and $G_{2}$ having only two common vertices $u$ and $v$. Let $K_{1}$ and $K_{2}$ be graphs obtained from $G_{1}$ and $G_{2}$, respectively, by identifying $u$ and $v$. Then
\begin{eqnarray}\label{twovert}
H(G_{1}:G_{2}) & = & \frac{1}{\sigma} \Big[ H(K_{1}) H(K_{2}) + (\sigma+1) H(G_{1}) H(G_{2})  \cr
& & \qquad + H(K_{1}) H(G_{2}) + H(K_{2}) H(G_{1}) \Big].
\end{eqnarray}
\end{proposition}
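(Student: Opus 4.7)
The plan is to work directly with the two-variable polynomial $h(G)(x,y)$ and derive a formula for $h(G_{1}:G_{2})$ in terms of $h(G_{1})$, $h(G_{2})$, $h(K_{1})$, and $h(K_{2})$. The identity for $H(G_{1}:G_{2})$ will then follow by specializing at $x = -1$, $y = -A - 2 - A^{-1}$, under which $xy - 1 = \sigma$.

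Writing $F = F_{1} \sqcup F_{2}$ with $F_{i} \subset E(G_{i})$ in the definition of $h$, one has
\[
h(G_{1}:G_{2}) \;=\; \sum_{F_{1},\,F_{2}}(-x)^{-|F_{1}|-|F_{2}|}\, f\bigl((G_{1} - F_{1}) : (G_{2} - F_{2})\bigr).
\]
The key step is to relate $f((G_{1} - F_{1}):(G_{2} - F_{2}))$ to $f(G_{1} - F_{1})$ and $f(G_{2} - F_{2})$. Let $a_{i} = a_{i}(F_{i}) \in \{0,1\}$ be the indicator that $u$ and $v$ lie in the same connected component of $G_{i} - F_{i}$. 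A short case-check over the four pairs $(a_{1}, a_{2})$, via the Euler identity $\beta = q - p + \mu$, shows that two-vertex gluing decreases the component count by $2 - a_{1}a_{2}$ and increases the first Betti number by $a_{1}a_{2}$, whence
\[
f\bigl((G_{1} - F_{1}) : (G_{2} - F_{2})\bigr) \;=\; x^{-2}(xy)^{a_{1}a_{2}}\, f(G_{1} - F_{1})\, f(G_{2} - F_{2}).
\]
The analogous computation for the one-vertex identification of $u$ and $v$ in $G_{i}-F_{i}$ yields $f(K_{i} - F_{i}) = x^{-1}(xy)^{a_{i}} f(G_{i} - F_{i})$.

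Substituting the first identity and separating the contributions with $a_{1}a_{2} = 0$ from those with $a_{1}a_{2} = 1$, the double sum factors as
\[
h(G_{1}:G_{2}) \;=\; x^{-2}\bigl[\,h(G_{1})\, h(G_{2}) + (xy - 1)\, S(G_{1})\, S(G_{2})\bigr],
\]
where $S(G_{i}) := \sum_{F_{i} : a_{i}(F_{i}) = 1}(-x)^{-|F_{i}|} f(G_{i} - F_{i})$. The auxiliary identity for $K_{i}$ gives $h(K_{i}) = x^{-1}[\,h(G_{i}) + (xy - 1)\, S(G_{i})\,]$, so $S(G_{i}) = (x\, h(K_{i}) - h(G_{i}))/(xy - 1)$. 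Plugging this back in and specializing to $x = -1$, $y = -A - 2 - A^{-1}$ collapses everything into the claimed identity after elementary algebra.

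I expect the main difficulty to be the case analysis underlying the key identity for $f((G_{1} - F_{1}) : (G_{2} - F_{2}))$: the structural fact that the corrections to $\mu$ and $\beta$ from two-vertex gluing depend only on the product $a_{1}a_{2}$, and not on $a_{1}$ and $a_{2}$ separately, is precisely what allows the double sum to split as a product of single-graph sums, each of which is then expressible through $h(G_{i})$ and $h(K_{i})$.
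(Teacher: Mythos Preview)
Your argument is correct and genuinely different from the paper's. The paper proceeds by induction on $q(G_{1}:G_{2})$, repeatedly applying the deletion--contraction rule $H(G)=H(G/e)+H(G-e)$ (and the loop rule $H(G)=-\sigma H(G-e)$) to an edge $e\in E(G_{1})$, with three sub-cases according to whether $e$ is a loop, a non-loop edge whose endpoints are not $\{u,v\}$, or an edge joining $u$ to $v$; each case reduces to the inductive hypothesis and the one-vertex product formula. Your proof bypasses induction entirely by working at the level of the two-variable polynomial $h$: the clean observation that the gluing correction to $f$ is $x^{-2}(xy)^{a_{1}a_{2}}$ lets the double state sum factor, and the companion identity $f(K_{i}-F_{i})=x^{-1}(xy)^{a_{i}}f(G_{i}-F_{i})$ supplies exactly the linear relation needed to eliminate the auxiliary sums $S(G_{i})$. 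This buys you a strictly stronger statement---a closed formula for $h(G_{1}:G_{2})(x,y)$, namely $h(G_{1}:G_{2})=x^{-2}\bigl[h(G_{1})h(G_{2})+(xy-1)^{-1}(xh(K_{1})-h(G_{1}))(xh(K_{2})-h(G_{2}))\bigr]$---of which the proposition is the specialization $x=-1$, $xy-1=\sigma$. The paper's route, on the other hand, has the virtue of using only the listed recursive properties (1)--(6) of $H$, never unpacking the subset-sum definition, which keeps the argument self-contained at the level of the one-variable invariant.
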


\smallskip

\begin{proof}
Denote $G = G_{1}:G_{2}$ and prove the statement using induction by $q(G)$, the number of edges of the graph $G$.

If $q(G)=0$,  then $G_{1}$ and $G_{2}$ consist of isolated vertices. Suppose that $p(G_{1}) =p_{1}\geq 2$ and $p (G_{2})  = p_{2}\geq 2$. Then $G$ consists of $p_{1}+p_{2}-2$ isolated vertices, that is a disjoint union of $p_{1}+p_{2}-2$ one-vertex graphs. By properties (1) and (4), $H(G) = (-1)^{p_{1}+p_{2}-2}$. Since $p(K_{1})=p_{1}-1$ and $p(K_{2}) = p_{2}-1$, for the right-hand side of Eq.(\ref{twovert}) we get:
$$
\frac{1}{\sigma} \Big[ (-1)^{p_{1} + p_{2}-2} + (\sigma + 1) (-1)^{p_{1} + p_{2}} + (-1)^{p_{1} -1 + p_{2}} + (-1)^{p_{2}-1+p_{1}} \Big] = (-1)^{p_{1}+p_{2}}.
$$
Hence, the equality Eq.(\ref{twovert}) holds.

Now suppose that the statement holds for all graphs which $q(G) < k$, where $k\geq 1$. Consider $G=G_{1}:G_{2}$ such that $q(G) = k$.  Let $e$ be an edge of $G$. Without loss of generality, we can suppose $e \in E(G_{1})$. There are three cases to be considered.

\underline{Case~1.} Let $e$ be a loop of $G_{1}$. Then $e$ is also a loop of $K_{1}$. Hence, we have
\begin{eqnarray*}
H(G_{1}:G_{2})  & = &  -\sigma H(G_{1} - e : G_{2})  \\
& = &  - \Big[ H(K_{1} - e) H(K_{2}) \ + (\sigma+1) H(G_{1} - e) H(G_{2}) \\
& & \quad  + H(K_{1} -e) H(G_{2}) + H(K_{2})H(G_{1}-e) \Big]  \\
&=& \frac{1}{\sigma} \Big[ H(K_{1}) H(K_{2}) + (\sigma+1)H(G_{1}) H(G_{2}) \\
& &   \quad + H(K_{1}) H(G_{2}) + H(K_{2}) H(G_{1}) \Big].
\end{eqnarray*}

\underline{Case~2.} Let $e$ be neither a loop nor an edge with two end vertices $u$ and $v$, where $u$ and $v$ are common vertices of $G_1$ and $G_2$. Then $e$ is a non-loop edge in $K_{1}$. Hence,
\begin{eqnarray*}
H(G/e)  & =  & H(G_{1}/e : G_{2}) \\
&  = &   \frac{1}{\sigma} \Big[  H(K_{1}/e )H(K_{2}) + (\sigma+1) H((G_{1}/e) H(G_{2})  \\
& & \qquad   + H(K_{1}/e) H(K_{2}) + H(K_{2}) H(G_{1}/e) \Big]
\end{eqnarray*}
and
\begin{eqnarray*}
H(G-e) &= & H(G_{1}-e:G_{2}) \\
& = &\frac{1}{\sigma} \Big[ H(K_{1}-e) H(K_{2}) + (\sigma+1) H(G_{1}-e) H(G_{2})  \\
& &  \qquad  + H(K_{1}-e) H(G_{2}) + H(K_{2}) H(G_{1}-e) \Big].
\end{eqnarray*}
Therefore,
\begin{eqnarray*}
H(G)&= & H(G/e) + H(G-e) \\
&= &\frac{1}{\sigma} \Big[ H(K_{1}/e) H(K_{2}) + (\sigma+1) H(G_{1}/e) H(G_{2}) \\
& &\quad  + H(K_{1}/e)H(G_{2}) + H(K_{2})H(G_{1}/e) + H(K_{1}-e) H(K_{2}) \\
& & \quad + (\sigma+1) H(G_{1}-e) H(G_{2})+ H(K_{1}-e) H(G_{2}) \\
& & \quad +H(K_{2}) H(G_{1}-e) \Big] .
\end{eqnarray*}
Hence
\begin{eqnarray*}
H(G) &=& \frac{1}{\sigma} \Big[ \Big( H(K_{1}/e) + H(K_{1}-e) \Big) H(K_{2}) \\
& & \quad + (\sigma+1) \Big( H((G_{1}/e) + H(G_{1}-e) \Big) H(G_{2}) \\
& & \quad + \Big( H(K_{1}/e) + H(K_{1}-e) \Big) H(G_{2}) \\
& & \quad  + H(K_{2})  \Big( H(G_{1}/e)+H(G_{1}-e) \Big) \Big]  \\
 &=& \frac{1}{\sigma} \Big[ H(K_{1})H(K_{2})+(\sigma+1)H(G_{1})H(G_{2})+H(K_{1})H(G_{2}) \\
 & & \quad +H(K_{2})H(G_{1}) \Big] .
\end{eqnarray*}

\underline{Case 3.} Let $e$ be an edge with two end vertices $u$ and $v$, where $u$ and $v$ are common vertices of $G_1$ and $G_2$. Then $e$ is a loop in $K_{1}$. Hence,  by properties (5) and (3) we have
$$
H(G/e) = H(G_{1}/e:G_{2}) = -H(K_{1}-e)H(K_{2}) = \frac{1}{\sigma}H(K_{1})H(K_{2})
$$
and
\begin{eqnarray*}
H(G-e) & =& H(G_{1}-e:G_{2}) \\
& = &\frac{1}{\sigma} \Big[ H(K_{1}-e) H(K_{2}) + (\sigma+1) H(G_{1}-e) H(G_{2}) \\
& & \qquad   + H(K_{1}-e) H(G_{2}) + H(K_{2}) H(G_{1}-e) \Big] \\
&=&\frac{1}{\sigma} \Big[ -\frac{1}{\sigma} H(K_{1}) H(K_{2}) + (\sigma+1) \Big( H(G_{1})+\frac{1}{\sigma}H(K_{1}) \Big) H(G_{2})  \\
& & \qquad -\frac{1}{\sigma} H(K_{1}) H(G_{2}) + H(K_{2}) \Big( H(G_{1})  + \frac{1}{\sigma} H(K_{1}) \Big) \Big] .
\end{eqnarray*}
Therefore,
\begin{eqnarray*}
H(G) &= & H(G/e) + H(G-e)  =  \frac{1}{\sigma} H(K_{1}) H(K_{2}) \\  & & +  \frac{1}{\sigma} \Big[ - \frac{1}{\sigma} H(K_{1}) H(K_{2})  + (\sigma+1) H(G_{2}) \Big( H(G_{1}) + \frac{1}{\sigma} H(K_{1}) \Big) \\
& &  \qquad  -\frac{1}{\sigma} H(K_{1}) H(G_{2}) + H(K_{2}) \Big( H(G_{1}) + \frac{1}{\sigma} H(K_{1}) \Big) \Big] \\
&=& \frac{1}{\sigma} \Big[ H(K_{1}) H(K_{2}) + (\sigma+1) H(G_{1}) H(G_{2}) + H(K_{1}) H(G_{2})\\
& & \quad + H(K_{2}) H(G_{1}) \Big] .
\end{eqnarray*}
This completes the proof.
\end{proof}

\section{Yamada polynomials of a graph obtained by edge replacements}

We recall some properties of the chain polynomial introduced by R.C.~Read and E.G.~Whitehead~\cite{RE}, see also~\cite{XF}.  The chain polynomial is  defined on edge-labelled graphs with labels are elements of a commutative ring with unity. We will denote the edges by the labels associated with them.

\begin{definition}
{\it The chain polynomial} $\operatorname{Ch}(G)$ of a labelled graph $G$ is defined as
$$
\operatorname{Ch}(G) = \sum_{Y \subset E} F_{G-Y} (1-w) \prod_{a \in Y} a,
$$
where the sum is taken over all subsets of the edge set $E$ of $G$, $F_{G-Y} (1-w)$ denotes the flow polynomial of the subgraph $G - Y$ calculated at $1-w$, and $\prod_{a\in Y}$ denotes the product of edge-labels of $Y$.
\end{definition}

The chain polynomial can be also defined in the following recursive form.

\begin{definition} \label{def3.2}
{\it The chain polynomial}  $\operatorname{Ch} (G) (w)$ in a variable $w$ of a labelled graph $G$ is defined by following rules.
\begin{itemize}
\item[(1)] If $G$ is edgeless, then $\operatorname{Ch}(G)=1$.
\item[(2)] Otherwise, suppose $a$ is an edge of $G$ labelled by $a$. Then
\begin{itemize}
\item[(2a)] If $a$ is a loop, then $\operatorname{Ch}(G) = (a-w) \operatorname{Ch}(G-a)$.
\item[(2b)] If $a$ is not a loop, then $\operatorname{Ch}(G) = (a-1) \operatorname{Ch} (G-a) + \operatorname{Ch}(G/a)$.
\end{itemize}
\end{itemize}
\end{definition}

\begin{example}\label{Ecn}
Let $C_{n}$ be the $n$-cycle with edges labelled by $a_{1}, a_{2}, \cdots, a_{n}$, then
$$
\operatorname{Ch}(C_{n}) = \prod_{i=1}^n a_i \, - \, w.
$$
\end{example}

\begin{example}
Let $\Theta_{s}$ be the $s$-theta-graph with edges labelled by \linebreak $a_{1}, a_{2}, \cdots, a_{s}$, then
$$
\operatorname{Ch} (\Theta_{s}) = \frac{1}{1-w} \Big[ \prod^{s}_{i=1}(a_{i}-w) - w \prod ^s_{i=1}(a_{i}-1) \Big].
$$
\end{example}

\begin{example}
Let $B_{q}$ be the $q$-bouquet with $q$ loops labelled by \linebreak  $a_{1}, a_{2}, \ldots,  a_{q}$, then
$$
\operatorname{Ch} ( B_{q}) = \prod^{И}_{i=1}(a_{i}-w).
$$
\end{example}

To explore the relation between the chain polynomial and the Yamada polynomial, inspired by \cite{XF}, let us give the following notation. Let $G$ be a connected labelled graph. Denote by $\widetilde{G}$ the graph obtained from $G$ by replacing each edge $a= uv$ of $G$ by a connected graph $K_{a}$ with two attached vertices $u$ and $v$ that has only the vertices $u$ and $v$ in common with $\widetilde{G-a}$.

Let $K'_{a}$ be the graph obtained from $K_{a}$ by identifying $u$ and $v$, the two attached
vertices of $K_{a}$. Denote
$$
\alpha_{a }= \alpha(K_{a}) := \frac{1}{\sigma}[(\sigma+1)H(K_{a})+H(K'_{a})],
$$
$$
\beta_{a} = \beta(K_{a}) := \frac{1}{\sigma}[H(K_{a})+H(K'_{a})],
$$
and
$$
\gamma_{a} = \gamma(K_{a}) : =1-\frac{\alpha (K_{a})}{\beta (K_{a})}.
$$
It is easy to see that
$$
H(K'_{a})=(\sigma+1)\beta_{a}-\alpha_{a} \qquad \text{and} \qquad  H(K_{a})=\alpha_{a}-\beta_{a}.
$$

The following result gives the relation between Yamada polynomial and chain polynomial.

\begin{theorem}\label{tger}
Let $G$ be a connected labelled graph, and $\widetilde{G}$ be the graph obtained from $G$ by replacing the edge $a$ by a connected graph $K_{a}$ for every edge $a$ of $G$. If we replace $w$ by $-\sigma$, and replace $a$ by $\gamma_{a}$ for every label $a$ in $\operatorname{Ch}(G)$, then we get
\begin{equation}
H(\widetilde{G})=\frac{\prod_{a\in E(G)}\beta_{a}}{(-1)^{q(G)-p(G)}} \operatorname{Ch}(G),
\end{equation}
where $p(G)$ and $q(G)$ are the numbers of vertices and edges of $G$, respectively.
\end{theorem}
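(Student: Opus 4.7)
My plan is to induct on $q(G)$, the number of edges of $G$. In the base case $q(G) = 0$, connectedness forces $G$ to be a single isolated vertex, so $\widetilde{G}$ is a single vertex and both sides reduce to $-1$ (using property~(1) and that $\operatorname{Ch}$ of an edgeless graph is $1$). For the inductive step I would pick any edge $a$ of $G$, split into the loop and non-loop cases, compute $H(\widetilde{G})$ using property~(5) or Proposition~\ref{ptv}, and match the resulting expression against the chain-polynomial recursion of Definition~\ref{def3.2} after the substitutions $w\mapsto -\sigma$ and $a\mapsto \gamma_a$.

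If $a$ is a loop, the two attached vertices of $K_a$ are identified in $\widetilde{G}$, so $K_a$ appears as $K'_a$ and $\widetilde{G} = K'_a \cdot \widetilde{G-a}$; property~(5) yields $H(\widetilde{G}) = -H(K'_a)\,H(\widetilde{G-a})$. The chain-polynomial loop rule contributes the factor $\gamma_a + \sigma$, and from the definitions one checks directly that $\gamma_a + \sigma = \frac{(\sigma+1)\beta_a - \alpha_a}{\beta_a} = H(K'_a)/\beta_a$. Since $p(G-a) = p(G)$ and $q(G-a) = q(G)-1$, applying the inductive hypothesis to $G-a$ closes this case after a short sign check.

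If $a$ is a non-loop with endpoints $u,v$, then $\widetilde{G} = K_a : \widetilde{G-a}$ in the sense of Proposition~\ref{ptv}; crucially, identifying $u$ and $v$ in $\widetilde{G-a}$ realizes exactly $\widetilde{G/a}$, so the two ``$K_i$'' graphs of that proposition are $K'_a$ and $\widetilde{G/a}$. The algebraic simplifications
\begin{equation*}
H(K_a) + H(K'_a) = \sigma\beta_a, \qquad (\sigma+1)H(K_a) + H(K'_a) = \sigma\alpha_a,
\end{equation*}
which are immediate from $H(K_a) = \alpha_a - \beta_a$ and $H(K'_a) = (\sigma+1)\beta_a - \alpha_a$, collapse Proposition~\ref{ptv} to
\begin{equation*}
H(\widetilde{G}) \;=\; \beta_a\, H(\widetilde{G/a}) + \alpha_a\, H(\widetilde{G-a}).
\end{equation*}
Applying the inductive hypothesis to $G/a$ and $G-a$ (with $q(G/a)-p(G/a) = q(G)-p(G)$ and $q(G-a)-p(G-a) = q(G)-p(G)-1$), substituting $\alpha_a/\beta_a = 1-\gamma_a$, and recognizing the bracket as the non-loop chain-polynomial recursion $\operatorname{Ch}(G) = (\gamma_a-1)\operatorname{Ch}(G-a) + \operatorname{Ch}(G/a)$ then produces the claimed identity.

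The main obstacle I anticipate is that $G-a$ or $G/a$ can be disconnected even when $G$ is connected, while the theorem is phrased for connected graphs. I would bypass this by first extending the formula to arbitrary labelled graphs: $H(\widetilde{\,\cdot\,})$ is multiplicative over disjoint unions by property~(4), $\operatorname{Ch}$ is multiplicative by its flow-polynomial definition, and the exponent $q - p$ is additive. With that extension in hand, the induction runs uniformly and no auxiliary case distinctions are needed.
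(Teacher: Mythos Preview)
Your proposal is correct and follows essentially the same route as the paper: both arguments verify the chain-polynomial recursion of Definition~\ref{def3.2} (edgeless base case, loop case via property~(5), non-loop case via Proposition~\ref{ptv}) and obtain the key identity $H(\widetilde{G}) = \beta_a H(\widetilde{G/a}) + \alpha_a H(\widetilde{G-a})$ from exactly the same algebraic simplifications. The only differences are cosmetic: the paper packages the induction as ``$T(G)$ satisfies the defining recursion of $\operatorname{Ch}(G)$'', and it handles the connectedness issue you flag by taking the base case to be an arbitrary edgeless graph (computing $H(\widetilde{G})=(-1)^{p(G)}$ directly) rather than extending the statement via multiplicativity.
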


\begin{proof}
Denote
$$
T(G) = \frac{(-1)^{q(G)-p(G)} H(\widetilde{G})}{\prod_{a\in E(G)}\beta_{a}}.
$$
To prove the statement we will show that $T(G)$ satisfies conditions from Definition~3.2 of $\operatorname{Ch}(G)$ if we identify $w$ with $-\sigma$ and $a$ with $\gamma_{a}$. Let us check these conditions.

(1) If $G$ is edgeless, then $E(G) = \emptyset$, $E(\widetilde{G}) = \emptyset$ and  $H(\widetilde{G}) =  (-1)^{p(\widetilde{G})} = (-1)^{p(G)}$. Then
$$
T(G)=\frac{(-1)^{q(G)-p(G)} H(\widetilde{G})  }{\prod_{a\in E(G)}\beta_{a}}= (-1)^{-p(G)} (-1)^{p(G)} = 1.
$$

(2a) If $a$ is a loop of $G$, then
\begin{eqnarray*}
T(G) & = & \frac{(-1)^{q(G)-p(G)} H(\widetilde{G})}{\prod_{a\in E(G)}\beta_{a}} \\
& = & \frac{(-1)^{q(G-a)+1-p(G-a)} \Big[-H(K_{a'}) H(\widetilde{G-a})\Big]}{\beta_{a}\prod_{a\in E(G-a)}\beta_{a}} \\
& = & \frac{(-1)^{q(G-a)+1-p(G-a)} \Big[ \Big( \alpha_{a} - (\sigma+1) \beta_{a} \Big) H(\widetilde{G-a}) \Big]}{\beta_{a} \prod_{a\in E(G-a)}\beta_{a}} \\
&=& \frac{- \Big( \alpha_{a}-(\sigma+1)\beta_{a}) \Big)}{\beta_{a}}T(G-a) =(\gamma_{a}+\sigma)T(G-a).
\end{eqnarray*}
Compare with $\operatorname{Ch}(G) = (a - w) \operatorname{Ch} (G-a)$ from Definition~3.2.

(2b) If $a \in E(G)$ is not a loop, then by Proposition~\ref{ptv}
\begin{eqnarray*}
H(\widetilde{G}) &=& H(\widetilde{G-a}:K_{a}) \\
& = & \frac{1}{\sigma} \Big[ H(\widetilde{G/a}) H(K'_{a} )+ (\sigma+1) H(\widetilde{G-a}) H(K_{a}) \\
& & \qquad \qquad \qquad \quad  + H(K'_{a}) H(\widetilde{G-a}) + H(K_{a}) H(\widetilde{G/a})\Big] \\
&= & \frac{1}{\sigma} \Big[ H(\widetilde{G/a}) \Big( H(K'_{a})+H(K_{a}) \Big) \\
&&   \qquad \qquad \qquad \quad+ H(\widetilde{G-a})\Big( (\sigma+1)H(K_{a})+H(K'_{a})  \Big) \Big] \\
&=& \beta_{a}H(\widetilde{G/a}) + \alpha_{a}H(\widetilde{G-a}).
\end{eqnarray*}
Therefore,
\begin{eqnarray*}
T(G) & =  & \frac{(-1)^{q(G)-p(G)}H(\widetilde{G})}{\prod_{a\in E(G)}\beta_{a}} \\
& = & \frac{(-1)^{q(G-a)+1-p(G-a)} \Big[ \beta_{a} H(\widetilde{G/a}) + \alpha_{a} H(\widetilde{G-a}) \Big]}{\beta_{a}\prod_{a\in E(G-a)}\beta_{a}}\\
&=&\frac{(-1)^{q(G/a)+1-p(G/a)-1}  \beta_{a} H(\widetilde{G/a}) }{\beta_{a}\prod_{a\in E(G/a)}\beta_{a}} \\
& & \quad +
\frac{(-1)^{q(G-a)+1-p(G-a)} \alpha_{a}H(\widetilde{G-a}) }{\beta_{a}\prod_{a\in E(G-a)}\beta_{a}}\\
&=&T(G/a) + \Big(-\frac{\alpha_{a}}{\beta_{a}} \Big) T(G-a) .
\end{eqnarray*}
Thus,
$$
T(G) = T(G/a) +(\gamma_{a}-1) T(G-a).
$$
Compare with $\operatorname{Ch} (G) = \operatorname{Ch} (G/a) +  (a-1) \operatorname{Ch} (G-a)$ from Definition~3.2.
\end{proof}

\section{Yamada polynomial of spatial graphs}

Next we will consider the Yamada polynomial of spatial graphs. Through the paper we work in the piecewise-linear category. Let $G$ be a graph embedded in $\mathbb R^3$, and $g$ be a diagram of  $G$. For any double point, S.~Yamada~\cite{YA} defined the spin of $+1$, $-1$ and $0$, which are denoted by $S_+$, $S_-$ and $S_0$, as shown in Fig.~\ref{fig2}.

\begin{figure}[!htb]
\centering
\scalebox{0.8}{\includegraphics{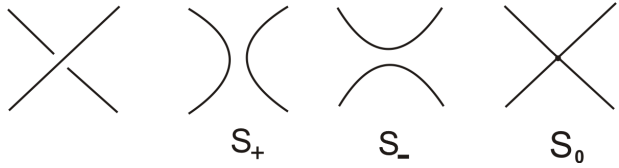}}
\caption{Spin of $+1$, $-1$ and $0$.} \label{fig2}
\end{figure}

Let $S$ be the plane graph obtained from $g$ by replacing each double point with a spin.  $S$ is called {\it a state} on $g$. Denote the set of all states by $U(g)$. Put
$$
c(g|S)= A^{m_1-m_2},
$$
where $m_1$ and $m_2$ are the numbers of double points with spin $S_+$ and $S_-$, respectively, used to obtain $S$ from $g$.

\smallskip

\begin{definition}~\cite{YA}\label{dsg}
{\it The Yamada polynomial} of a diagram $g$ of a spatial graph $G$ is a Laurent polynomial in variable $A$ defined as follows
$$
R[g] = R[g](A) : =\sum_{S\in U(g)} c(g|S)H(S),
$$
where $H(S) = h(S)(-1,-A-2-A^{-1})$.
\end{definition}

For interesting properties of the Yamada polynomial and its invariantness under transformations of spatial graph diagrams see~\cite{YA}. Here we recall only some of them.

\smallskip

\begin{proposition}~\cite{YA}\label{pr}
The following properties hold.
\begin{itemize}
\item[(1) ]Let $g_{1}\cup g_{2}$ be a disjoint union of diagrams $g_{1}$ and $g_{2}$, then
$$R[g_{1}\cup g_{2}]= R[g_{1}]R[g_{2}].$$
\item[(2)]  Let $g_{1}\cdot g_{2}$ be a union of diagrams $g_{1}$ and $g_{2}$ having one common point, then
$$
R[g_{1}\cdot g_{2}] = -R[g_{1}]R[g_{2}].
$$
\item[(3)] If $g$ has an isthmus, then  $R[g]= 0$.
\end{itemize}
\end{proposition}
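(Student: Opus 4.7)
The approach is uniform across the three parts: unfold the state-sum definition of $R[g]$, relate the states of the composite diagram to pairs of states of the pieces, and invoke the corresponding multiplicative or vanishing property of the graph polynomial $H$ established earlier.

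For part (1), note that no crossing of $g_{1}\cup g_{2}$ lies simultaneously in both components, so the set $U(g_{1}\cup g_{2})$ is in natural bijection with $U(g_{1})\times U(g_{2})$ via $(S_{1},S_{2})\mapsto S_{1}\cup S_{2}$. The $\pm$-spin counts add, so that $c(g_{1}\cup g_{2}\mid S_{1}\cup S_{2})=c(g_{1}\mid S_{1})\,c(g_{2}\mid S_{2})$, and property (4) of $H$ yields $H(S_{1}\cup S_{2})=H(S_{1})H(S_{2})$; factoring the resulting double sum gives $R[g_{1}\cup g_{2}]=R[g_{1}]R[g_{2}]$. Part (2) is proved by exactly the same argument, except that each state $S$ is now a one-vertex union $S_{1}\cdot S_{2}$, whence property (5) of $H$ contributes a single overall minus sign via $H(S_{1}\cdot S_{2})=-H(S_{1})H(S_{2})$.

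For part (3), I first handle the clean case in which the isthmus $e$ carries no crossings. Deleting $e$ splits $g$ into subdiagrams $g_{1}, g_{2}$ meeting $e$ at its endpoints, and since $e$ contributes no spin, each $S\in U(g)$ has the form $S_{1}\cup\{e\}\cup S_{2}$ for states $S_{i}\in U(g_{i})$. In the plane graph $S$ the edge $e$ is still an isthmus, so property (6) of $H$ yields $H(S)=0$ for every $S$ and hence $R[g]=0$. For the general situation in which crossings may sit on $e$, the same conclusion holds because, for any choice of spins at these crossings, the $g_{1}$- and $g_{2}$-sides of the resulting plane graph are joined by a single arc produced by the consecutive smoothings along $e$, and this arc is an isthmus of $S$.

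The main obstacle is not analytic but combinatorial: checking that states, weights, and vertex sets really do factor through the prescribed decompositions in (1) and (2), and, for (3), tracing how the consecutive spin resolutions along $e$ concatenate into a single bridging arc in each state. Once these bookkeeping points are cleared, the three statements reduce mechanically to properties (4), (5), and (6) of the graph polynomial $H$.
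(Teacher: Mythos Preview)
The paper does not supply its own proof of this proposition: it is quoted from Yamada's original article \cite{YA} and stated without argument. Your state-sum derivation is therefore not competing against anything in the text. For parts (1) and (2) your argument is the standard one and is entirely correct---it is essentially the same unfolding of the state sum that the paper carries out explicitly a few lines later when proving Proposition~\ref{cdtv}.

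For part (3) your reasoning is right in spirit but slightly loose at one point. When crossings sit on the isthmus $e$ and some of them are resolved with the $S_0$ spin, the resulting state does \emph{not} have the two sides joined by a single arc: each $S_0$ resolution inserts a $4$-valent vertex along the former $e$-strand. What remains true is that the extremal sub-arc of $e$ incident to (say) the $g_2$-endpoint is still a bridge in every state, so $H(S)=0$ regardless; you should phrase it that way rather than claiming a single bridging arc. Also note that your splitting of $g-e$ into two genuinely disjoint subdiagrams $g_1,g_2$ presupposes the diagrammatic reading of ``isthmus'' (removing $e$ disconnects the planar diagram, so $g_1$ and $g_2$ share no crossings); this is the reading used here, but it is worth saying explicitly, since under the purely graph-theoretic reading an extra isotopy argument would be needed.
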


\smallskip
\begin{remark}
If a diagram $g$ of $G$ does not have double points, then $R[g]= H(G)$.
\end{remark}

Proposition \ref{ptv} about $H(G)$ implies the similar  formula for polynomial $R[g]$.

\begin{proposition}\label{cdtv}
Let  $g_{1}:g_{2}$ be the union of two diagrams $g_{1}$ and $g_{2}$ having only two common vertices $u$ and $v$. Let $k_{1}$ and $k_{2}$ be diagrams obtained from $g_{1}$ and $g_{2}$, respectively, by identifying $u$ and $v$. Then
\begin{equation}
\begin{gathered}
R[g_{1}:g_{2}] = \frac{1}{\sigma} \Big( R[k_{1}] R[k_{2}] + (\sigma+1) R[g_{1}] R[g_{2}] \\ + R[k_{1}] R[g_{2}] + R[k_{2}] R[g_{1}] \Big).
\end{gathered}
\end{equation}
\end{proposition}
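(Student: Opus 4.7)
The plan is to reduce the statement to Proposition~\ref{ptv} by expanding both sides as state sums. Recall that by Definition~\ref{dsg},
$$
R[g] = \sum_{S \in U(g)} c(g|S) H(S),
$$
so the strategy is to unwind the state sum of $R[g_1 : g_2]$, apply Proposition~\ref{ptv} to each resolved state (which is an abstract plane graph), and then repackage the four resulting double sums as the product state sums defining $R[k_1], R[k_2], R[g_1], R[g_2]$.

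First I would observe that every double point of the diagram $g_{1}:g_{2}$ belongs to exactly one of $g_1$ or $g_2$, since $g_1$ and $g_2$ meet only at the vertices $u$ and $v$. Consequently the set of states factors as $U(g_1:g_2) = U(g_1) \times U(g_2)$: a state $S$ on $g_1:g_2$ is exactly a pair $(S_1, S_2)$ with $S_i \in U(g_i)$, and the resulting plane graph is $S = S_1 : S_2$ (still glued along $u$ and $v$). The coefficient is then multiplicative, $c(g_1:g_2 \mid S) = c(g_1\mid S_1)\, c(g_2 \mid S_2)$, because the exponent $m_1 - m_2$ is the sum of the corresponding exponents coming from $g_1$ and $g_2$ independently.

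Next I would invoke Proposition~\ref{ptv} on the plane graph $S_1 : S_2$, with $S_i'$ denoting the graph obtained from $S_i$ by identifying $u$ and $v$. Proposition~\ref{ptv} gives
\begin{equation*}
H(S_1:S_2) = \frac{1}{\sigma}\Bigl[ H(S_1') H(S_2') + (\sigma+1) H(S_1) H(S_2) + H(S_1') H(S_2) + H(S_2') H(S_1)\Bigr].
\end{equation*}
Substituting this into the state-sum expression and interchanging the sums, I would obtain four separate double sums, each of the form $\sum_{S_1}\sum_{S_2} c(g_1\mid S_1) c(g_2\mid S_2)\, (\text{product of }H\text{'s})$, which split as products of single sums. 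The key small observation to make explicit is that identifying $u$ and $v$ does not touch any double point, so there is a natural bijection $U(g_i) \leftrightarrow U(k_i)$ under which $S_i \mapsto S_i'$ and $c(g_i\mid S_i) = c(k_i \mid S_i')$. Hence $\sum_{S_i} c(g_i\mid S_i) H(S_i) = R[g_i]$ and $\sum_{S_i} c(g_i\mid S_i) H(S_i') = R[k_i]$, and assembling the four pieces yields the claimed formula.

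I do not expect a genuine obstacle here: the proof is essentially bookkeeping that lifts the abstract-graph identity from Proposition~\ref{ptv} across the state sum. The only place to be careful is the bijection $U(g_i) \leftrightarrow U(k_i)$ and the verification that the sign/weight $c(g_i\mid S_i)$ is preserved; once this is confirmed, the factorization into the four products $R[k_1]R[k_2]$, $R[g_1]R[g_2]$, $R[k_1]R[g_2]$, $R[k_2]R[g_1]$ is automatic.
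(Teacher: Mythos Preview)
Your proposal is correct and follows essentially the same approach as the paper: expand $R[g_1:g_2]$ as a state sum, factor states as pairs $(S_1,S_2)$ with multiplicative weights, apply Proposition~\ref{ptv} to each $H(S_1:S_2)$, and regroup into the four products. Your explicit remark about the bijection $U(g_i)\leftrightarrow U(k_i)$ and the preservation of $c(g_i\mid S_i)$ is a point the paper leaves implicit but is indeed the only step requiring care.
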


\begin{proof}
By Definition \ref{dsg} and Proposition \ref{ptv}, we obtain
\begin{eqnarray*}
R[g_{1}:g_{2}] & = & \sum_{S\in U(g_{1}:g_{2})} c(g_{1}:g_{2}|S) H(S) \\
&=& \sum_{S_{1}\in U(g_{1})} \sum_{S_{2}\in U(g_{2})} c(g_{1}|S_1) c(g_{2}|S_2) H(S_{1}:S_2) \\
&=&\sum_{S_{1}\in U(g_{1})} \sum_{S_{2}\in U(g_{2})} c(g_{1}|S_1) c(g_{2}|S_2) \frac{1}{\sigma} \Big[ H(S_{1}')H(S_{2}') \\
& & \quad + (\sigma+1) H(S_{1}) H(S_{2}) +H(S_{1})H(S_{2}')+H(S_{1}')H(S_{2}) \Big]\\
&=&\frac{1}{\sigma}  \Big(  \sum_{S_{1} \in U(g_{1})} \Big[ c(g_{1}|S_1) H(S_{1}') \Big] \sum_{S_{2}\in U(g_{2})} \Big[ c(g_{2}|S_2) H(S_{2}') \Big]  \\
& &  + (\sigma+1) \sum_{S_{1} \in U(g_{1})} \Big[ c(g_{1}|S_1) H(S_{1}) \Big] \sum_{S_{2}\in U(g_{2})} \Big[ c(g_{2}|S_2) H(S_{2}) \Big] \\
&&  +  \sum_{S_{1}\in U(g_{1})} \Big[ c(g_{1}|S_1)H(S_{1}') \Big]  \sum_{S_{2}\in U(g_{2})} \Big[ c(g_{2}|S_2) H(S_{2}) \Big] \\
& & +\sum_{S_{1}\in U(g_{1})}  \Big[ c(g_{1}|S_1) H(S_{1}) \Big]  \sum_{S_{2} \in U(g_{2})}  \Big[c(g_{2}|S_2) H(S_{2}') \Big] \Big) \\
&=&\frac{1}{\sigma} \Big[ R[k_{1}]R[k_{2}] + (\sigma+1) R[g_{1}] R[g_{2}] + R[k_{1}] R[g_{2}] \\ & & \qquad \qquad + R[k_{2}] R[g_{1}] \Big].
\end{eqnarray*}
This completes the proof.
\end{proof}

\section{Yamada Polynomials of diagrams obtained by edge replacements}

Let $G$ be a connected plane labelled graph, where each edge $a$ with terminal vertices $u$ and $v$ is labelled by a diagram $g_{a}$ having  two vertices $u_{a*}$ and $v_{a*}$ indicated. We define $G(g_{a})$ to be the spatial graph (its diagram) obtained from $G$ by replacing an edge $a= u_{a}v_{a}$ of $G$ by a connected diagram $g_{a}$ with identification $u_{a}$ with $u_{a*}$ and $v_{a}$ with $v_{a*}$ in such a way that  $g_{a}$ has only vertices $u_{a}=u_{a*}$ and $v_{a}=v_{a*}$ in common with $G-a$. We define $G(g_{a}, g_{b})$ to be the spatial graph (its diagram) obtained from $G(g_{a})$ by replacing an edge $b= u_{b}v_{b}$ of $G(g_{a})\cap (G-a)$  by a connected diagram $g_{b}$ with identification $u_{b}$ with $u_{b*}$ and $v_{b}$ with $v_{b*}$ in such a way that  $g_{b}$ has only vertices $u_{b}=u_{b*}$ and $v_{b}=v_{b*}$ in common with $G(g_{a})$. With the same construction, we can obtain $G(g_{a_1}, g_{a_2}, g_{a_3}, \ldots)$.
In this context we denote by $g_{a}'$ the diagram obtained from $g_{a}$ by identifying vertices $u_{a*}$ and $v_{a*}$ in such a way that no new intersections appear.

\begin{theorem}\label{tcn} The following properties hold.
\begin{itemize}
\item[(1)] Let $C_{n}$ be the $n$-cycle graph with edges labelled by $a_{1}, a_{2}, \ldots, a_{n}$, then
\begin{equation}\label{ecn}
R[C_{n}(g_{a_1}, g_{a_2}, \cdots, g_{a_n})] = \prod_{i=1}^n(-R[g_{a_i}])+\sigma\prod_{i=1}^n \Big( \frac{R[g_{a_i}]+R[g_{a_i}']}{\sigma} \Big).
\end{equation}
\item[(2)] Let $\Theta_{s}$ be the $s$-theta-graph with edges labelled by $a_{1}, a_{2}, \ldots, a_{s}$, then
\begin{equation}\label{eths}
\begin{gathered}
R[\Theta_{s}(g_{a_1}, g_{a_2}, \cdots, g_{a_s})] \quad \qquad \qquad \qquad  \qquad \qquad \qquad \qquad \qquad  \\
= \frac{(-1)^{s}}{1+\sigma} \prod_{i=1}^sR[g_{a_i}'] + \frac{\sigma}{1+\sigma} \prod_{i=1}^s \Big(\frac{\sigma+1}{\sigma}R[g_{a_i}]+\frac{1}{\sigma}R[g_{a_i}'] \Big).
\end{gathered}
\end{equation}
\item[(3)] Let $B_{q}$ be the $q$-bouquet with loops labelled by $a_{1}, a_{2}, \ldots, a_{q}$, then
\begin{equation}\label{ebp}
R[B_{q}(g_{a_1}, g_{a_2}, \cdots, g_{a_q})] = (-1)^{q-1}\prod_{i=1}^q R[g_{a_i}'].
\end{equation}
\end{itemize}
\end{theorem}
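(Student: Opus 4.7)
The plan is to promote Theorem~\ref{tger} to the setting of spatial diagrams, and then evaluate the resulting identity at the three families $C_n$, $\Theta_s$, $B_q$ using the chain polynomials recorded in the Examples of Section~3.

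The key observation is that Proposition~\ref{cdtv} for $R[\cdot]$ on diagrams is formally identical to Proposition~\ref{ptv} for $H(\cdot)$ on abstract graphs; together with the loop rule $R[g_1\cdot g_2]=-R[g_1]R[g_2]$ of Proposition~\ref{pr}(2) and the fact that $R$ coincides with $H$ on crossingless diagrams, this lets the three-case induction of the proof of Theorem~\ref{tger} go through verbatim with $R[g_a]$ replacing $H(K_a)$ and $R[g_a']$ replacing $H(K_a')$. Setting
$$
\alpha_a:=\tfrac{1}{\sigma}\bigl[(\sigma+1)R[g_a]+R[g_a']\bigr],\qquad \beta_a:=\tfrac{1}{\sigma}\bigl[R[g_a]+R[g_a']\bigr],\qquad \gamma_a:=1-\tfrac{\alpha_a}{\beta_a},
$$
one obtains the spatial analogue
$$
R\bigl[G(g_{a_1},\dots,g_{a_m})\bigr] = \frac{\prod_{a\in E(G)}\beta_a}{(-1)^{q(G)-p(G)}}\cdot \operatorname{Ch}(G),
$$
where on the right $w$ is set to $-\sigma$ and each label $a_i$ to $\gamma_{a_i}$.

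Each of the three parts of Theorem~\ref{tcn} then reduces to a short substitution powered by the two identities $\beta_a\gamma_a=-R[g_a]$ and $\beta_a(\gamma_a+\sigma)=R[g_a']$, both immediate from the definitions. For $C_n$, substituting $\operatorname{Ch}(C_n)=\prod a_i-w$ collapses the product to $\prod(-R[g_{a_i}])$ while the $-w$ term yields $\sigma\prod\beta_{a_i}=\sigma\prod\tfrac{R[g_{a_i}]+R[g_{a_i}']}{\sigma}$, giving~\eqref{ecn}. For $B_q$, substituting $\operatorname{Ch}(B_q)=\prod(a_i-w)$ and using $q(B_q)-p(B_q)=q-1$, the product $\prod\beta_{a_i}(\gamma_{a_i}+\sigma)=\prod R[g_{a_i}']$ directly gives~\eqref{ebp}. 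For $\Theta_s$ I substitute $\operatorname{Ch}(\Theta_s)=(1-w)^{-1}\bigl[\prod(a_i-w)-w\prod(a_i-1)\bigr]$ and use the additional identity $\beta_a(\gamma_a-1)=-\alpha_a$ to evaluate $\prod(a_i-1)$; the denominator becomes $1+\sigma$, and combining the signs $(-1)^s$ and the $\sigma^{1-s}$ factor with the global prefactor $\prod\beta_{a_i}/(-1)^{s-2}$ reproduces~\eqref{eths}.

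The main obstacle is the $\Theta_s$ case, where one must carefully track the $(-1)^{s-2}$ coming from $q(\Theta_s)-p(\Theta_s)$, the $\sigma^{1-s}$ produced by expanding $\prod(\gamma_{a_i}-1)$ after substitution, and the $(1+\sigma)^{-1}$ prefactor, and verify that these combine to the coefficients $(-1)^s/(1+\sigma)$ and $\sigma/(1+\sigma)$ in~\eqref{eths}. The cycle and bouquet parts are essentially automatic once the spatial version of Theorem~\ref{tger} is in hand.
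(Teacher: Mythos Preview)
Your argument is correct, but the route differs from the paper's. The paper proves each of the three formulae by a direct induction on $n$, $q$, $s$ respectively: for $C_n$ it writes $C_n(g_{a_1},\dots,g_{a_n})=g_{a_n}:L_{n-1}(g_{a_1},\dots,g_{a_{n-1}})$ with $L_{n-1}$ a path, applies Proposition~\ref{cdtv}, and simplifies using Proposition~\ref{pr}; for $B_q$ it is immediate from Proposition~\ref{pr}(2); for $\Theta_s$ it writes $\Theta_s(\dots)=\Theta_{s-1}(\dots):g_{a_s}$, applies Proposition~\ref{cdtv} again, and uses the already established bouquet formula since $\Theta_{s-1}'=B_{s-1}$.

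Your approach is more structural: you lift Theorem~\ref{tger} to spatial diagrams once, and then all three formulae drop out by substituting the explicit chain polynomials of $C_n$, $\Theta_s$, $B_q$. This is a genuine gain in generality, since the spatial analogue of Theorem~\ref{tger} you establish applies to \emph{any} connected plane base graph $G$, not just these three families; the paper never states this analogue explicitly. The price is that you must convince the reader that the recursion-check in the proof of Theorem~\ref{tger} transfers verbatim, i.e.\ that Proposition~\ref{pr}(2) plays the role of property~(5) of $H$ in case~(2a) and Proposition~\ref{cdtv} replaces Proposition~\ref{ptv} in case~(2b). You have identified these ingredients correctly, and the algebraic identities $\beta_a\gamma_a=-R[g_a]$, $\beta_a(\gamma_a+\sigma)=R[g_a']$, $\beta_a(\gamma_a-1)=-\alpha_a$ that you use in the specialisation step are exactly right.
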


\begin{proof}
(1) Firstly, we prove Eq.(\ref{ecn}) by induction on $n$. Recall, that by Example~\ref{Ecn}, the chain polynomial of the $n$-cycle graph is $$\operatorname{Ch} (C_n) = \prod_{i=1}^n a_i-w.$$

If $n=1$, then $R[C_1(g_{a_1})]=R[g_{a_1}']$, so Eq.(\ref{ecn}) holds.

Suppose  Eq.(\ref{ecn}) holds for all $n \leq k-1$, where $k \geq 2$. Observe that spatial graph  $C_{k}(g_{a_1}, g_{a_2}, \ldots, g_{a_k})$ can be presented as the union $g_{a_k} : L_{k-1}(g_{a_1}, g_{a_2}, \ldots, g_{a_{k-1}})$, where $L_{k-1}$ is a graph with $k$ vertices connected by $k-1$ edges one by one, as shown in~Fig.~\ref{fig3}.

\begin{figure}[!htbp]
\centering
{\includegraphics[scale=0.8]{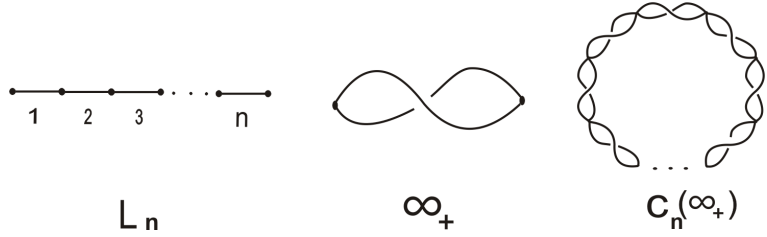}}
\caption{Graph $L_n$, diagrams $\infty_{+}$ and $C_{n}(\infty_{+})$} \label{fig3}
\end{figure}

Then
$$
R[C_{k}(g_{a_1}, g_{a_2}, \cdots, g_{a_k})]=R[g_{a_k} : L_{k-1}(g_{a_1}, g_{a_2}, \cdots, g_{a_{k-1}})].
$$
By Proposition \ref{cdtv}, we have
\begin{eqnarray*}
& & R[g_{a_k} : L_{k-1}(g_{a_1}, g_{a_2}, \cdots, g_{a_{k-1}})]\\
& & \qquad \qquad \qquad = \frac{1}{\sigma} \Big[ R[L_{k-1}(g_{a_1}, g_{a_2}, \cdots, g_{a_{k-1}})']R[g_{a_k}'] \\
& & \qquad \qquad \qquad \qquad + (1+\sigma)R[L_{k-1}(g_{a_1}, g_{a_2}, \cdots, g_{a_{k-1}})]R[g_{a_k}]\\
& & \qquad \qquad \qquad \qquad   + R[L_{k-1}(g_{a_1}, g_{a_2}, \cdots, g_{a_{k-1}})']R[g_{a_k}] \\
& & \qquad \qquad \qquad \qquad +R[L_{k-1}(g_{a_1}, g_{a_2}, \cdots, g_{a_{k-1}})]R[g_{a_k}'] \Big].
\end{eqnarray*}

Applying Proposition \ref{pr}, we get
$$
R[L_{k-1}(g_{a_1}, g_{a_2}, \cdots, g_{a_{k-1}})] = (-1)^{k}\prod_{i=1}^{k-1}R[g_{a_i}]
$$
and
$$
\begin{gathered}
R[L_{k-1}(g_{a_1}, g_{a_2}, \cdots, g_{a_{k-1}})'] = R[C_{k}(g_{a_1}, g_{a_2}, \cdots, g_{a_{k-1}})] \\ =\prod_{i=1}^{k-1} \Big( -R[g_{a_i}] \Big) + \sigma \prod_{i=1}^{k-1} \Big( \frac{R[g_{a_i}] + R[g_{a_i}']}{\sigma} \Big).
\end{gathered}
$$
Therefore,
\begin{eqnarray*}
&& R[C_{k}(g_{a_1}, g_{a_2}, \cdots, g_{a_k})]\\
& & = \frac{1}{\sigma} \Big[  \Big( \prod_{i=1}^{k-1}(-R[g_{a_i}]) + \sigma \prod_{i=1}^{k-1}\frac{R[g_{a_i}]+R[g_{a_i}']}{\sigma} \Big) R[g_{a_k}'] \\
& & \qquad \qquad +(1+\sigma)(-1)^{k} \Big( \prod_{i=1}^{k-1} R[g_{a_i}] \Big) R[g_{a_k}] \\
& & \qquad \qquad  + \Big( \prod_{i=1}^{k-1}(-R[g_{a_i}]) + \sigma \prod_{i=1}^{k-1}\frac{R[g_{a_i}] + R[g_{a_i}']}{\sigma} \Big) R[g_{a_k}] \\
& & \qquad \qquad +(-1)^{k} \Big( \prod_{i=1}^{k-1}R[g_{a_i}] \Big) R[g_{a_k}'] \Big]  \\
& & = \prod_{i=1}^k (-R[g_{a_i}]) + \sigma\prod_{i=1}^k  \Big( \frac{R[g_{a_i}] +R[g_{a_i}']}{\sigma} \Big).
\end{eqnarray*}

Thus, Eq.(\ref{ecn}) holds.

(3) Similarly to the above case, Eq.(\ref{ebp}) is equivalent to
$$
\begin{gathered}
R[B_{q}(g_{a_1}, g_{a_2}, \ldots, g_{a_q})] \qquad \qquad  \qquad \qquad \qquad \qquad \qquad \qquad  \\
=R[g_{a_1}] \, R[g_{a_2}] \cdots R[g_{a_{q-1}}] \, R[g_{a_q}]= (-1)^{q-1}\prod_{i=1}^qR[g_{a_i}'].
\end{gathered}
$$

(2) Now, we prove Eq.(\ref{eths}) by induction on $s$.  If $s=1$, then $R[\Theta_{1}(g_{a_1})]= R[g_{a_1}]$ and Eq.(\ref{eths}) holds.

Suppose that Eq.(\ref{eths}) holds for all $s \leq k-1$, where $k\geq 2$. Observe that the left-hand side of Eq.(\ref{eths}) can be presented as the union $R[\Theta_{s}(g_{a_1}, g_{a_2}, \cdots, g_{a_{s-1}}):  g_{a_s}]$.

Since
$$
R[\Theta_{s}(g_{a_1}, g_{a_2}, \ldots, g_{a_s})'] = R[B_{s}(g_{a_1}, g_{a_2}, \ldots, g_{a_s})],
$$
by Proposition \ref{cdtv} we have
\begin{eqnarray*}
\begin{gathered}
 R[\Theta_{s}(g_{a_1}, g_{a_2}, \ldots, g_{a_{s}})] \qquad \qquad \qquad \qquad \qquad  \qquad \qquad \\
 = \frac{1}{\sigma} \Big( R[\Theta_{s}(g_{a_1}, g_{a_2}, \ldots, g_{a_{s-1}})']R[g_{a_s}'] \\
 \qquad + (1+\sigma) R[\Theta_{s}(g_{a_1}, g_{a_2}, \ldots, g_{a_{s-1}})]R[g_{a_s}] \\
 \qquad +R[\Theta_{s}(g_{a_1}, g_{a_2}, \ldots, g_{a_{s-1}})'] R[g_{a_s}]
 + R[\Theta_{s}(g_{a_1}, g_{a_2}, \ldots, g_{a_{s-1}})]R[g_{a_s}'] \Big) \\
 = \frac{(-1)^{s}}{1+\sigma}\prod_{i=1}^sR[g_{a_i}'] + \frac{\sigma}{1+\sigma}\prod_{i=1}^s \Big( \frac{\sigma+1}{\sigma}R[g_{a_i}] + \frac{1}{\sigma}R[g_{a_i}'] \Big).
\end{gathered}
\end{eqnarray*}
The proof is competed.
\end{proof}

If every edge $a$ of $G$ is labelled by the same diagram $g_{a} = g$, we denote $G(g, \ldots, g)$ shortly by $G(g)$.

\begin{corollary}
If all edges of a graph $G$ are labelled by the same diagram $g$, then
\begin{equation}
R[C_{n}(g)]=(-R[g])^{n}+\sigma \Big( \frac{R[g]+R[g']}{\sigma} \Big)^{n},
\end{equation}
\begin{equation}
R[\Theta_{s}(g)] = \frac{(-1)^{s}}{1+\sigma}R[g']^{s} + \frac{\sigma}{1+\sigma} \Big( \frac{\sigma+1}{\sigma}R[g]+\frac{1}{\sigma}R[g'] \Big)^{s},
\end{equation}
and
\begin{equation}
R[B_{q}(g)] = (-1)^{q-1} R[g']^q.
\end{equation}
\end{corollary}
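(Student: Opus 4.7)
The plan is to deduce each of the three identities as an immediate specialization of Theorem~\ref{tcn}. The hypothesis of the corollary says that every edge $a$ of $G$ is labelled by the same diagram $g$, so in particular $g_{a_i}=g$ for all $i$, and consequently $g_{a_i}'=g'$ for all $i$ (because the construction $g\mapsto g'$ — identifying the two attached vertices of the diagram without creating new crossings — depends only on the diagram, not on which edge of $G$ it labels). Under this substitution, every product $\prod_{i=1}^{n}$, $\prod_{i=1}^{s}$, or $\prod_{i=1}^{q}$ in Eq.~(\ref{ecn}), Eq.~(\ref{eths}), Eq.~(\ref{ebp}) collapses into an $n$-th, $s$-th, or $q$-th power of a single common factor.

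Concretely, Eq.~(\ref{ecn}) immediately becomes $R[C_n(g)]=(-R[g])^n+\sigma\bigl((R[g]+R[g'])/\sigma\bigr)^n$; Eq.~(\ref{eths}) becomes $R[\Theta_s(g)]=\frac{(-1)^s}{1+\sigma}R[g']^s+\frac{\sigma}{1+\sigma}\bigl(\frac{\sigma+1}{\sigma}R[g]+\frac{1}{\sigma}R[g']\bigr)^s$; and Eq.~(\ref{ebp}) becomes $R[B_q(g)]=(-1)^{q-1}R[g']^q$. These are exactly the three asserted formulas. I do not anticipate any real obstacle: the corollary is a pure algebraic specialization of a result that has already been established by induction, so the entire proof amounts to observing that the edge-indexed products reduce to powers under the hypothesis.
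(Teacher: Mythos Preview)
Your proposal is correct and matches the paper's approach exactly: the corollary is stated without proof, as an immediate specialization of Theorem~\ref{tcn} obtained by setting every $g_{a_i}=g$, which is precisely what you do.
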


Remark that  $C_{n}(g_{a_1}, g_{a_2}, \ldots, g_{a_n})$ is the ``ring of beads'' discussed in~\cite{xj} and~\cite{RE}. By Eq.(\ref{ecn}), we get the following property.

\begin{corollary}
The Yamada polynomial $R[C_{n}(g_{a_1}, g_{a_2}, \ldots, g_{a_n})]$ of a  ``ring of beads''  graph $C_{n}(g_{a_1}, g_{a_2}, \ldots, g_{a_n})$ is
independent of the order in which the beads occur.
\end{corollary}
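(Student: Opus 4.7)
The plan is to read this off directly from Theorem~\ref{tcn}(1). Both terms on the right-hand side of Eq.(\ref{ecn}) are expressed as products indexed by $i = 1, \ldots, n$, namely
$$
\prod_{i=1}^n \bigl(-R[g_{a_i}]\bigr) \quad \text{and} \quad \sigma \prod_{i=1}^n \Bigl(\tfrac{R[g_{a_i}] + R[g_{a_i}']}{\sigma}\Bigr),
$$
whose factors lie in the commutative ring $\mathbb{Z}[A, A^{-1}]$. Since multiplication in this ring is commutative, each product is invariant under any permutation of the index set. Therefore $R[C_n(g_{a_1}, \ldots, g_{a_n})]$ depends only on the multiset $\{g_{a_1}, \ldots, g_{a_n}\}$ of bead diagrams, not on their cyclic or linear ordering around $C_n$.

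So the entire proof is simply: apply Theorem~\ref{tcn}(1), and observe that for any permutation $\pi$ of $\{1, \ldots, n\}$,
$$
\prod_{i=1}^n(-R[g_{a_i}]) = \prod_{i=1}^n(-R[g_{a_{\pi(i)}}]) \quad \text{and} \quad \prod_{i=1}^n\Bigl(\tfrac{R[g_{a_i}]+R[g_{a_i}']}{\sigma}\Bigr) = \prod_{i=1}^n\Bigl(\tfrac{R[g_{a_{\pi(i)}}]+R[g_{a_{\pi(i)}}']}{\sigma}\Bigr).
$$
There is no real obstacle here; the corollary is a one-line consequence of the explicit formula. The only thing to remark is that the statement requires each bead $g_{a_i}$ to be attached at the same two distinguished vertices $u_{a_i *}, v_{a_i *}$, so that both $R[g_{a_i}]$ and $R[g_{a_i}']$ are well-defined and unchanged when the position of the bead along the cycle is permuted; this follows from the setup preceding Theorem~\ref{tcn}.
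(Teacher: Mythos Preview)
Your proposal is correct and matches the paper's approach exactly: the paper simply states the corollary as a direct consequence of Eq.(\ref{ecn}), and your argument spells out the obvious reason, namely that the right-hand side is a sum of products of commuting Laurent polynomials and hence symmetric in the beads.
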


The following example illustrates the applying of Theorem \ref{tcn}.

\begin{example}\label{E4}
Let $\infty_{+}$ be the spatial graph diagram with two vertices and one double point signed by ``$+$'', see Fig.~2. We denote the mirror image of diagram $\infty_{+}$ by $\infty_{-}$. Direct calculations give Yamada polynomials $R[\infty_{+}]=A^{-2}\sigma$ and $R[\infty_{+}']=\sigma$, where $\sigma = A + 1 + A^{-1}$. By Theorem \ref{tcn}, the Yamada polynomial of $C_{n}(\infty_{+})$ is as follows:
$$
R[C_{n}(\infty_{+})]=(-A^{-2}\sigma)^{n}+\sigma(A^{-2}+1)^{n};
$$
the Yamada polynomial of $\Theta_{s}(\infty_{+})$ is as follows:
$$
R[\Theta_{n}(\infty_{+})]=\frac{(-\sigma)^s+\sigma\big((\sigma+1)A^{-2}+1\big)^s}{1+\sigma};
$$
and the Yamada polynomial of $B_{q}(\infty_{+})$ is as follows:
$$
R[B_{q}(\infty_{+})]=(-1)^{q-1} \sigma^q.
$$
\end{example}

\section{Zeros of Yamada polynomial}

In this section, we investigate the density of zeros of Yamada polynomial of two classes of spatial graphs. One is the ``ring of beads'' graphs $C_{n}(\Theta_{s})$ obtained from $C_{n}$ by replacing the edge $a$ by a connected plane $s$-theta graph $\Theta_{s}$ for every edge $a$ of $C_{n}$. The other is the class of spatial graphs $C_{n}(\Theta_{s}(\infty_{+}))$ and their  mirror images $C_{n}(\Theta_{s}(\infty_{-}))$.

To study zeros of families of polynomials we will base on the following  results by S.~Beraha, J.~Kahane, N.J.~Weiss \cite{BJ} and by A.D.~Sokal \cite{SO}.

\begin{theorem}\label{tbj} \cite{BJ}
If $\{f_n(x)\}$ is a family of polynomials such that
$$
f_n(x) = \alpha_1(x) \lambda_1(x)^n + \alpha_2(x) \lambda_2(x)^n +\ldots + \alpha_l(x) \lambda_l(x)^n,
$$
where the $\alpha_i(x)$ and $\lambda_i(x)$ are fixed non-zero polynomials, such that no pair $i\neq j$ has
$\lambda_i(x) \equiv \omega \lambda_j(x)$ for some complex number $\omega$ of unit modulus. Then $z$ is a limit of zeros of  $\{f_n(x)\}$ if and only if
\begin{itemize}
\item[(1)] two or more of the $\lambda_i(z)$ are of equal modulus, and strictly greater in modulus than the others; or
\item[(2)] for some j, the modulus of $\lambda_j(z)$ is strictly greater than those of the others, and $\alpha_i(z)=0$.
\end{itemize}
\end{theorem}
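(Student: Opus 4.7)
The plan is to prove the two directions separately, starting with necessity since it is the cleaner direction, and then handle the two sufficiency cases, of which case (1) is the real obstacle.

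For \emph{necessity}, suppose that at a point $z_0 \in \mathbb{C}$ neither (1) nor (2) holds. Then there is a unique index $j$ with $|\lambda_j(z_0)| > |\lambda_i(z_0)|$ for all $i \neq j$, and $\alpha_j(z_0) \neq 0$. By continuity this remains true on a small disc $U$ around $z_0$. On $U$ we write
\[
\frac{f_n(z)}{\lambda_j(z)^n} \;=\; \alpha_j(z) \;+\; \sum_{i\neq j}\alpha_i(z)\Bigl(\frac{\lambda_i(z)}{\lambda_j(z)}\Bigr)^n,
\]
and since each ratio $\lambda_i/\lambda_j$ has modulus strictly less than $1$ on $\overline{U}$ (after possibly shrinking $U$), the sum converges uniformly to $\alpha_j$ on $\overline{U}$. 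Since $\alpha_j$ is non-vanishing there, Hurwitz's theorem guarantees that $f_n/\lambda_j^n$, and hence $f_n$, has no zeros in $U$ for all sufficiently large $n$. So $z_0$ is not a limit of zeros.

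For \emph{sufficiency in case (2)}, suppose $\alpha_{j_0}(z_0)=0$ while $|\lambda_{j_0}(z_0)|$ is strictly largest among the $|\lambda_i(z_0)|$. The same uniform convergence argument gives $f_n(z)/\lambda_{j_0}(z)^n \to \alpha_{j_0}(z)$ on a neighborhood of $z_0$, and the limit has an isolated zero at $z_0$. Hurwitz's theorem then produces zeros of $f_n/\lambda_{j_0}^n$, and hence of $f_n$, converging to $z_0$.

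The hard part is \emph{sufficiency in case (1)}. Relabel so that $|\lambda_1(z_0)|=\cdots=|\lambda_r(z_0)|$ is the common maximum modulus (with $r\geq 2$) and all other $|\lambda_i(z_0)|$ are strictly smaller. On a small disc $U$ around $z_0$ we factor out $\lambda_1^n$ and obtain
\[
\frac{f_n(z)}{\lambda_1(z)^n} \;=\; \alpha_1(z) + \sum_{i=2}^{r}\alpha_i(z)\,\phi_i(z)^n \;+\; \varepsilon_n(z),
\]
where $\phi_i=\lambda_i/\lambda_1$ satisfies $|\phi_i(z_0)|=1$, and $\varepsilon_n \to 0$ uniformly on $\overline{U}$. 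The non-proportionality hypothesis guarantees that each $\phi_i$ is a non-constant holomorphic function, and moreover that no $\phi_i/\phi_j$ is a constant of modulus $1$. The key step is to show that the ``main'' equation $\alpha_1(z)+\sum_{i=2}^{r}\alpha_i(z)\phi_i(z)^n=0$ has solutions in $U$ for infinitely many $n$: using the open mapping theorem, as $z$ ranges over a short arc through $z_0$ on which $|\phi_2(z)|$ traverses a neighborhood of $1$, the values $\phi_2(z)^n$ sweep out the unit circle many times, and one combines this with an equidistribution/Weyl-type argument for the joint phases of $\phi_2(z)^n,\ldots,\phi_r(z)^n$ (the irrationality coming precisely from the non-proportionality hypothesis) to land arbitrarily close to $-\alpha_1(z_0)/\alpha_i(z_0)$ for some chosen $i$. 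Once approximate roots are located, a Rouch\'e argument applied on small circles around them (using the uniform smallness of $\varepsilon_n$) upgrades them to genuine roots of $f_n$, completing the proof.

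The principal obstacle is the simultaneous phase control in case (1) when $r>2$: the analysis of a single ratio $\phi_2$ via the open mapping theorem is elementary, but coordinating the phases of several ratios $\phi_2,\ldots,\phi_r$ so that the truncated sum vanishes requires an equidistribution argument whose hypotheses follow from, and really motivate, the ``no $\lambda_i\equiv\omega\lambda_j$'' clause in the statement.
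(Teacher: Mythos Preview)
The paper does not prove this theorem at all: it is quoted verbatim from Beraha--Kahane--Weiss \cite{BJ} and used as a black box in the proofs of Theorems~\ref{tmain} and~\ref{tmain2}. So there is no ``paper's own proof'' to compare against, and any comparison of approaches is moot.

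That said, since you have attempted a proof, a brief comment on the sketch itself. Your necessity argument and your sufficiency argument for case~(2) via Hurwitz are standard and correct. Your treatment of case~(1), however, is only a plan, not a proof: the step where you ``combine this with an equidistribution/Weyl-type argument for the joint phases of $\phi_2(z)^n,\ldots,\phi_r(z)^n$'' is precisely the heart of the Beraha--Kahane--Weiss result, and you have not actually carried it out. In particular, the phases $\arg\phi_i(z)$ vary with $z$, so there is no fixed irrational rotation to which Weyl equidistribution applies; one needs instead a careful argument (in the original paper, via a contour integral and an analysis of how the dominant terms interact along curves where two $|\lambda_i|$ coincide) to produce the required zeros. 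As written, the proposal identifies the obstacle correctly but does not overcome it.
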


\begin{lemma} \cite[Lemma 1.6]{SO} \label{lso}
Let $F_{1}$, $F_{2}$, and $G$ be analytic functions on a disc $\{ z \in \mathbb C : |z|<R \}$ such that $|G(0)|\leq 1$ and G not constant. Then, for each $\epsilon >0$ there exists $s_{0}<\infty$ such that for all integers $s\geq s_{0}$ the equation
$$
|1+F_{1}(z)G(z)^s|=|1+F_{2}(z)G(z)^s|
$$
has a solution in the disc $|z|<\epsilon$.
\end{lemma}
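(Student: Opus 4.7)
The plan is to square both sides to reduce the problem to a real-valued equation, and then exploit the non-constancy of $G$ together with the open mapping theorem to force a sign change on a small circle. After squaring, the equation $|1+F_1G^s|=|1+F_2G^s|$ is equivalent to
$$
\phi_s(z) \;:=\; 2\operatorname{Re}\bigl((F_1-F_2)\,G^s\bigr) + (|F_1|^2-|F_2|^2)\,|G|^{2s} \;=\; 0,
$$
so it suffices to produce a zero of the continuous real-valued function $\phi_s$ in the disc $|z|<\epsilon$ for all sufficiently large $s$. After disposing of the trivial case $F_1\equiv F_2$ (every $z$ works), the main actor is the holomorphic function $\psi_s(z):=(F_1(z)-F_2(z))\,G(z)^s$, which is not identically zero, and I would split on the value $|G(0)|$.

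In subcase (a), $|G(0)|<1$, I would choose a radius $r<\epsilon$ so small that $|G(z)|\le 1-\delta$ on $|z|=r$ for some $\delta>0$; then the correction term is $O((1-\delta)^{2s})$, of strictly smaller order than the extremal values of the main term $2\operatorname{Re}(\psi_s)$, which are comparable to $(1-\delta)^s$. The heart of the argument is to prove that along $|z|=r$ the argument $\arg\psi_s(z)=\arg(F_1-F_2)(z)+s\,\arg G(z)$ sweeps through all of $[0,2\pi)$ once $s$ is large. This I would extract from the local expansion $G(z)=G(0)+cz^k+O(z^{k+1})$ with $c\neq 0$ (the degenerate case $G(0)=0$ being handled identically by $G(z)=cz^k+\cdots$), which gives a strictly positive total variation of $\arg G$ along the circle $|z|=r$ that is independent of $s$; multiplication by $s$ then eventually produces more than one full rotation. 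Consequently $\operatorname{Re}(\psi_s)$ changes sign on the circle, and since the lower-order correction cannot overwhelm the main term for $s$ large, the intermediate value theorem yields a zero of $\phi_s$ in $|z|<\epsilon$.

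In subcase (b), $|G(0)|=1$, the correction term is no longer of smaller order, so the preceding argument fails directly at the origin. I would instead invoke the open mapping theorem applied to $G$ itself: since $G$ is non-constant, $G(\{|z|<\epsilon/2\})$ is an open neighborhood of the boundary point $G(0)$ on the unit circle, and hence contains some $w_0$ with $|w_0|<1$. Pulling back, there is $z_0$ with $|z_0|<\epsilon/2$ and $|G(z_0)|<1$; choosing a disc about $z_0$ of radius less than $\epsilon/2$ on which $|G|\le 1-\delta$, the subcase (a) argument applied on this shifted disc produces a zero of $\phi_s$ still inside $|z|<\epsilon$.

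The main obstacle I anticipate is making the argument-sweeping estimate fully uniform in $s$: the circle must be fixed before $s$ is chosen, and one needs simultaneously that $s$ times the total variation of $\arg G$ exceeds $2\pi$ and that the $|G|^{2s}$ correction is dominated by the $|G|^s$ main term, for all $s\ge s_0$. This competition between the three small parameters $\epsilon$, $r$, and $1/s$ is precisely what forces the case $|G(0)|=1$ to be dealt with by first moving inward via the open mapping theorem, rather than attempting to run the oscillation argument at the origin, where $|G|^s$ and $|G|^{2s}$ live on the same scale.
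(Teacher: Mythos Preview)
The paper does not prove this lemma at all: it is quoted directly from Sokal~\cite{SO} (as Lemma~1.6 there) and invoked as a black box in the proofs of Theorems~\ref{tmain} and~\ref{tmain2}. There is therefore no argument in the present paper against which to compare your attempt.

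That said, your outline is a reasonable route to an independent proof. The reduction to a sign change of $\phi_s$ is correct, and the key mechanism --- that for fixed small $r$ the oscillation of $s\arg G$ along $|z|=r$ eventually exceeds $2\pi$, forcing $\operatorname{Re}\psi_s$ to take both signs, while the quadratic correction $(|F_1|^2-|F_2|^2)|G|^{2s}$ is beaten pointwise at those extrema because $|G|^{2s}\le(1-\delta)^s|G|^s$ --- is sound. One point you should make explicit: the claim that $\arg G$ has strictly positive oscillation on $|z|=r$ (rather than being constant) follows because if $\operatorname{Im}(e^{-i\alpha}G)\equiv 0$ on the circle then, being harmonic, it vanishes on the whole disc and $G$ would be constant. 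Your handling of the boundary case $|G(0)|=1$ via the open mapping theorem, recentering at a nearby point with $|G|<1$, is the natural move and avoids exactly the difficulty you identify, where $|G|^s$ and $|G|^{2s}$ are of the same scale at the origin.
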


Now, let us turn to families of Yamada polynomials.

\begin{theorem}\label{tmain}
Zeros of the Yamada polynomials $\{R[C_{n}(\Theta_{s})]\}_{n=1,s=1}^{\infty, \infty}$ are dense in the region $\{z\in \mathbb{C} \, : \,  |z+1+z^{-1}|\geq 1\}$.
\end{theorem}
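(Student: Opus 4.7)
The plan is to combine Theorem~\ref{tbj} (Beraha-Kahane-Weiss, applied with $n\to\infty$ for each fixed $s$) with Lemma~\ref{lso} (Sokal, to let $s$ sweep the equimodular curves across the target region).

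First I compute $R[C_n(\Theta_s)]$ explicitly. Since $\Theta_s$ is a plane diagram, $R[\Theta_s]=H(\Theta_s)$ and $R[\Theta_s']=H(B_s)$ are given by Lemma~\ref{lemma-dv}(iii)-(iv). Substituting into the corollary to Theorem~\ref{tcn} yields
\begin{equation*}
R[C_n(\Theta_s)] \;=\; \lambda_1(\sigma,s)^n + \sigma\,\lambda_2(\sigma,s)^n,\qquad \lambda_1 = -\frac{\sigma + (-\sigma)^s}{\sigma+1},\quad \lambda_2 = \frac{1 - (-\sigma)^s}{\sigma+1}.
\end{equation*}
Writing $w=-\sigma$, we have $\lambda_1/\lambda_2 = (w-w^s)/(1-w^s)$, which is not constant for $s\geq 2$, so Theorem~\ref{tbj} applies (after clearing the denominator $(\sigma+1)^n$). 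Its conclusion~(1) identifies the accumulation points of zeros (as $n\to\infty$, $s$ fixed) with the equimodular locus $\Gamma_s = \{z:|\lambda_1(\sigma(z),s)| = |\lambda_2(\sigma(z),s)|\}$; conclusion~(2) is harmless in the region $|\sigma|\geq 1$ because the only vanishing coefficient is $\alpha_2=\sigma$, whose zeros lie at $|\sigma|=0$.

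It therefore suffices to show that $\bigcup_{s\geq 2}\Gamma_s$ is dense in $\{z:|\sigma(z)|\geq 1\}$. Given such a $z_0$ with $\sigma$ locally invertible there, set $w_0=-\sigma(z_0)$ so $|w_0|\geq 1$. Dividing the equimodular equation by $|w^s|$ rewrites it as $|1-w\cdot w^{-s}| = |1-w^{-s}|$. Taking
\begin{equation*}
G(\zeta)=\frac{1}{w_0+\zeta},\qquad F_1(\zeta)=-(w_0+\zeta),\qquad F_2(\zeta)=-1,
\end{equation*}
all analytic on a small disc around $\zeta=0$, one has $|G(0)|=1/|w_0|\leq 1$ and $G$ non-constant. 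Lemma~\ref{lso} then produces, for any $\varepsilon>0$, a threshold $s_0$ such that for every $s\geq s_0$ the equation $|1+F_1 G^s|=|1+F_2 G^s|$ has a solution $\zeta_s$ with $|\zeta_s|<\varepsilon$. Pulling back through $\sigma$ supplies $z_s\in\Gamma_s$ within $O(\varepsilon)$ of $z_0$; a further appeal to Theorem~\ref{tbj} provides an actual zero of $R[C_n(\Theta_s)]$ arbitrarily close to $z_s$ for $n$ large, and a diagonal extraction yields zeros of the family converging to $z_0$. The excluded branch points $z=\pm 1$ form a finite set, so density is unaffected.

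The principal obstacle is the change of variables in the third step: one must recognize how to recast the equimodular condition in precisely the form $|1+F_1 G^s|=|1+F_2 G^s|$ required by Lemma~\ref{lso}, with $|G(0)|\leq 1$ valid throughout the closed region $|\sigma|\geq 1$ (including its boundary circle $|\sigma|=1$). Once the substitution $G(\zeta)=(w_0+\zeta)^{-1}$ is identified, verifying that the leading eigenvalue behavior matches the Sokal-BKW framework is routine.
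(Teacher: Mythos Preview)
Your proof is correct and follows essentially the same strategy as the paper: compute $R[C_n(\Theta_s)]$ via Theorem~\ref{tcn} and Lemma~\ref{lemma-dv}, apply Beraha--Kahane--Weiss to identify the equimodular locus for each fixed $s$, recast that locus in the form $|1+F_1G^s|=|1+F_2G^s|$ with $|G(0)|\le 1$, and invoke Sokal's Lemma~\ref{lso} to sweep the target region. The only cosmetic difference is that the paper applies Lemma~\ref{lso} directly in the $A$-variable (taking $G(a)=\bigl(-\sigma(A_0+a)\bigr)^{-1}$, so the solution already lives in the $A$-plane), whereas you work in the auxiliary variable $w=-\sigma$ and then pull back through the locally invertible map $z\mapsto\sigma(z)$; this costs you the extra remark about the branch points $z=\pm1$, which the paper's parametrization sidesteps.
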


\begin{proof}
By calculating the Yamada polynomial of $C_{n}(\Theta_{s})$  we obtain
$$
R[C_{n}(\Theta_{s})]=H(C_{n}(\Theta_{s})) = \Big( -H(\Theta_{s}) \Big)^n + \sigma \Big( \frac{H(\Theta_{s})+H(\Theta_{s}')}{\sigma} \Big)^n.
$$
We will apply Theorem~\ref{tbj} in the case $l=2$ and use condition (1). Thus, numbers $A$ satisfying the equation
\begin{equation}\label{e1}
|-H(\Theta_{s})| = \Big| \frac{H(\Theta_{s}) + H(\Theta_{s}')}{\sigma} \Big|,
\end{equation}
are exactly limits of zeros of Yamada polynomials  $\{ R[C_{n} (\Theta_{s})] \}_{n=1,s=1}^{\infty, \infty}$.

Observe that $\Theta_{s}'$ is the $s$-bouquet $B_{s}$. Since polynomials $H(\Theta_{s})$ and $H(B_{s})$ were calculated in Lemma~\ref{lemma-dv}, we have
$$
H(\Theta_{s})=\frac{\sigma-(-1)^{s-1}\sigma^s}{1+\sigma} \qquad \text{and} \qquad H(\Theta_{s}')=(-1)^{s-1}\sigma^s.
$$
Therefore, Eq.(\ref{e1}) is equivalent to
$$
\Big| \frac{\sigma-(-1)^{s-1}\sigma^s}{1+\sigma} \Big| =  \Big| \frac{1}{\sigma} \Big[ \frac{\sigma-(-1)^{s-1}\sigma^s}{1+\sigma}+(-1)^{s-1}\sigma^s \Big] \Big| ,
$$
whence
$$
|\sigma-(-1)^{s-1}\sigma^s| =   |1+(-1)^{s-1}\sigma^s|
$$
and then
\begin{equation}\label{e2}
|1+\sigma(-\sigma)^{-s}|=|1-(-\sigma)^{-s}|.
\end{equation}

Recall that $\sigma = A + 1 + A^{-1}$. Let $A_0$ be a complex number with $|A_{0}+1+A_{0}^{-1}|\geq 1$. Denoting $a=A-A_0$ we get that  Eq.(\ref{e2}) is equivalent to
\begin{equation}
\begin{gathered}
\Big| 1+ \Big( (a+\!A_0) + (a+\!A_0)^{-1} +1 \Big)  \Big(-(a+A_0) - (a +A_0)^{-1} - 1 \Big)^{-s} \Big| \\
= \Big| 1- \Big( - (a +A_0)-(a +A_0)^{-1} -1\Big)^{-s} \Big|.
\end{gathered} \label{e3}
\end{equation}

Consider function
$$
G(a) = \Big(-(a+A_0) - (a+  A_0)^{-1} -1 \Big)^{-1}.
$$
Then Eq.(\ref{e3}) is equivalent to
$$
\Big| 1+ \Big( (a+\!A_0) + (a+\!A_0)^{-1} +1 \Big)  G(a)^{s} \Big| \\
= \Big| 1- G(a)^{s} \Big|
$$
and $|G(0)|=\frac{1}{|A_0+1+A_0^{-1}|} \leq 1$. Hence by Lemma \ref{lso}, for any positive real  number $\varepsilon > 0$ there exists $s_0$ such that for any $s\geq s_0$ the equation Eq.(\ref{e3}) has a solution in the disk $|a| < \varepsilon$. Thus, for $A_{0}$ there exists  $A'$ such that  $|A'-A_0|<\frac{\varepsilon}{2}$ and $A'$  satisfies the equation Eq.(\ref{e1}).

Since zeros of Eq.(\ref{e1}) are the limits of zeros of Yamada polynomials  $\{ R [C_{n}(\Theta_{s})] \}_{n=1,s=1}^{\infty, \infty}$,  there exist integers $n>0$ and $s>0$ such that $R[C_{n}(\Theta_{s})]$ has zero $A^{*}$ with $|A^{*}-A'|< \frac{\varepsilon}{2}$.

So there exists a zero $A^{*}$ of $R[C_{n}(\Theta_{s})]$ with $|A^{*}-A_0|\leq \varepsilon$, where complex number $A_0$ satisfies $|A_{0}+1+A_{0}^{-1}|\geq 1$.
\end{proof}

We remark that another proof of Theorem~\ref{tmain} follows from~\cite{SO} by the relations between graph polynomials. Indeed, Therem~7.2 of \cite{SO} states that if $|q_{0} -1| \geq 1$, then for each $\varepsilon >0$ there exists $s_{0} < \infty$ and $n_{0}(s) < \infty$ such that for all $s \geq s_{0}$ and $n \geq n_{0}$, the flow polynomial $F_{C_{n(\Theta_{s})}} (q)$ has a zero in the disc $| q - q_{0}| < \varepsilon$. Recall, see for example~\cite{AK}, that for planar graphs $G$ the Yamada polynomial coincides with a renormalization of the flow polynomial:
$$
F_{G}(q) = (-1)^{V(G) - E(G)} R_{G} (A),   \quad \text{where} \quad q = A +2 + A^{-1}
$$
with $|q_{0} - 1| \geq 1$ equivalent to $|A_{0} + 1 + A^{-1}_{0}| \geq 1$.

Next we consider zeros of Yamada polynomial of the second class of spatial graphs.

\begin{theorem}\label{tmain2}
Zeros of Yamada polynomials
$$
\{R[C_{n}(\Theta_{s}(\infty_{+}))]\}_{n=1, s=1}^{\infty, \infty}
$$
are dense in the region
$$
\{z\in \mathbb{C} \, : \,  |z^{-3}+2z^{-2}+z^{-1}+1|\leq |z^{-1}+1+z|\}.
$$
Zeros of the Yamada polynomials $\{ R[C_{n}(\Theta_{s}(\infty_{-}))] \}_{n=1, s=1}^{\infty, \infty}$ are dense in the region
$$
\{z\in \mathbb{C} \, : \,  |z^{3}+2z^{2}+z^1+1|\leq |z^{-1}+1+z|\}.
$$
\end{theorem}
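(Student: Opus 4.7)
The plan is to follow the proof of Theorem~\ref{tmain} step by step, replacing the role of the planar theta graph $\Theta_s$ by the spatial graph $\Theta_s(\infty_\pm)$. Concretely, I would begin by applying Theorem~\ref{tcn}(1) with $g = \Theta_s(\infty_+)$ to write
\[
R[C_n(\Theta_s(\infty_+))] = (-R[\Theta_s(\infty_+)])^n + \sigma\left(\frac{R[\Theta_s(\infty_+)] + R[\Theta_s(\infty_+)']}{\sigma}\right)^n,
\]
which has the two-term form $\alpha_1\lambda_1^n + \alpha_2\lambda_2^n$ required by Theorem~\ref{tbj}. Hence, for each fixed $s$, the limits of zeros as $n\to\infty$ are precisely the values $A$ satisfying $|\lambda_1(A)| = |\lambda_2(A)|$, i.e.
\begin{equation}\label{eqplan1}
\bigl|R[\Theta_s(\infty_+)]\bigr| = \left|\frac{R[\Theta_s(\infty_+)] + R[\Theta_s(\infty_+)']}{\sigma}\right|.
\end{equation}
From Example~\ref{E4} I have the explicit formulas $R[\Theta_s(\infty_+)] = ((-\sigma)^s + \sigma\mu^s)/(1+\sigma)$, where $\mu := (\sigma+1)A^{-2}+1$, and, since $\Theta_s(\infty_+)' = B_s(\infty_+)$ (collapsing the two attached vertices of $\Theta_s$ turns each copy of $\infty_+$ into $\infty_+'$), also $R[\Theta_s(\infty_+)'] = (-1)^{s-1}\sigma^s = -(-\sigma)^s$.

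Substituting these into \eqref{eqplan1}, clearing the common factor $1+\sigma$, and dividing by $|(-\sigma)^s|$, a short computation reduces the equation to
\[
|1 + \sigma\, G(A)^s| = |1 - G(A)^s|, \qquad G(A) := -\frac{(\sigma+1)A^{-2}+1}{\sigma}.
\]
This is precisely the form $|1 + F_1 G^s| = |1 + F_2 G^s|$ (with $F_1 = \sigma$, $F_2 = -1$) to which Lemma~\ref{lso} applies. Expanding $(\sigma+1)A^{-2}+1 = A^{-3}+2A^{-2}+A^{-1}+1$ shows that the condition $|G(A_0)| \leq 1$ is literally $|A_0^{-3}+2A_0^{-2}+A_0^{-1}+1| \leq |A_0+1+A_0^{-1}|$, i.e.\ exactly the claimed region.

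For $A_0$ in this region, I translate $a = A - A_0$; Lemma~\ref{lso} then produces, for each $\varepsilon > 0$, an $s_0$ such that for every $s \geq s_0$ the equation has a solution $A'$ with $|A' - A_0| < \varepsilon/2$. By Theorem~\ref{tbj} this $A'$ is a limit of zeros of $\{R[C_n(\Theta_s(\infty_+))]\}_n$, so some $n$ yields a zero $A^{*}$ within $\varepsilon/2$ of $A'$, and hence within $\varepsilon$ of $A_0$. For the mirror case $\infty_-$, the only change is that mirroring a diagram induces the substitution $A \leftrightarrow A^{-1}$ in the Yamada polynomial, giving $R[\infty_-] = A^{2}\sigma$ and $(\sigma+1)A^{2}+1 = A^{3}+2A^{2}+A+1$ in place of $\mu$; the identical argument then yields density in the reflected region.

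The bulk of the work is algebraic bookkeeping; I expect the main obstacle to be verifying the non-degeneracy hypotheses of the two abstract tools --- that $\lambda_1 \not\equiv \omega\lambda_2$ for any unit $\omega$ (needed by Theorem~\ref{tbj}) and that $G$ is non-constant with $|G(0)| \leq 1$ after translation (needed by Lemma~\ref{lso}). The exceptional values of $A_0$ (where $\sigma = 0$, where $\mu = 0$, where the $\lambda_i$ become proportional with unit ratio, or where $A_0 = 0$) form a discrete set, and a small perturbation inside the region lets the density conclusion absorb them.
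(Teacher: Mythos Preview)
Your proposal is correct and follows essentially the same route as the paper's proof: both apply Theorem~\ref{tcn}(1) to obtain the two-term expression, invoke Theorem~\ref{tbj} to identify the locus $|\lambda_1|=|\lambda_2|$, substitute the formulas from Example~\ref{E4} to reduce this locus to $|1+\sigma G^s|=|1-G^s|$ with $G=-\bigl((\sigma+1)A^{-2}+1\bigr)/\sigma$, and then apply Lemma~\ref{lso} after the translation $a=A-A_0$. The only cosmetic difference is that the paper deduces the $\infty_-$ case from the mirror relation $R[g](A)=R[\widehat g](A^{-1})$ applied to the whole diagram, whereas you apply it at the level of the building block; your additional remarks on the non-degeneracy hypotheses of Theorem~\ref{tbj} and Lemma~\ref{lso} and on the discrete exceptional set are points the paper leaves implicit.
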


\begin{proof}
By calculating the Yamada polynomial of $C_{n}(\Theta_{s}(\infty_{+}))$  we obtain
$$
R[C_{n}(\Theta_{s}(\infty_{+}))]= \Big( -R[\Theta_{s}(\infty_{+})] \Big)^n + \sigma \Big( \frac{R[\Theta_{s}(\infty_{+})]+R[\Theta_{s}(\infty_{+})']}{\sigma} \Big)^n.
$$
By Theorem \ref{tbj}, numbers $A$ satisfying the equation
\begin{equation}\label{e21}
|-R[\Theta_{s}(\infty_{+})]| = \Big| \frac{R[\Theta_{s}(\infty_{+})] + R[\Theta_{s}(\infty_{+})']}{\sigma} \Big|,
\end{equation}
are exactly limits of zeros of Yamada polynomials  $\{ R[C_{n} (\Theta_{s}(\infty_{+}))] \}_{n=1,s=1}^{\infty, \infty}$.

Observe that $\Theta_{s}'$ is the $s$-bouquet $B_{s}$. By Example \ref{E4} we have
$$
R[\Theta_{s}(\infty_{+})]=\frac{(-\sigma)^s+\sigma\big((\sigma+1)A^{-2}+1\big)^s}{1+\sigma}
$$
and
$$
R[\Theta_{s}(\infty_{+})']=(-1)^{s-1}\sigma^s.
$$
Therefore, Eq.(\ref{e21}) is equivalent to
$$
\begin{gathered}
\left| \frac{(-\sigma)^s+\sigma\big((\sigma+1)A^{-2}+1\big)^s}{1+\sigma} \right| \qquad \qquad \qquad \qquad \qquad \qquad  \qquad \qquad \\
=  \left| \frac{1}{\sigma} \left( \frac{(-\sigma)^s+\sigma\big((\sigma+1)A^{-2}+1\big)^s}{1+\sigma}+(-1)^{s-1}\sigma^s \right) \right| ,
\end{gathered}
$$
whence
\begin{equation}\label{e22}
\left|1+\sigma\Big( \frac{(\sigma+1)A^{-2}+1}{-\sigma} \Big)^{s}\right|=\left|1-\Big( \frac{(\sigma+1)A^{-2}+1}{-\sigma} \Big)^{s}\right|.
\end{equation}

Recall that $\sigma = A + 1 + A^{-1}$. Let $A_0$ be a complex number with
$$
|A_0^{-3}+2A_0^{-2}+A_0^{-1}+1|\leq |A_0^{-1}+1+A_0|.
$$
Denoting $a=A-A_0$ we get that  Eq.(\ref{e22}) is equivalent to
\begin{equation}
\begin{gathered}
\left| 1+ \Big( (a+\!A_0)+1+(a+\!A_0)^{-1}\Big) \right. \qquad \qquad \qquad \qquad  \\
\cdot \left. \left( \frac{\Big( (a+\!A_0)+2+(a+\!A_0)^{-1}\Big)(a+\!A_0)^{-2}+1}{-\Big( (a+\!A_0)+1+(a+\!A_0)^{-1}\Big)}\right)^s \right| \\
= \left| 1- \left( \frac{\Big( (a+\!A_0)+2+(a+\!A_0)^{-1}\Big)(a+\!A_0)^{-2}+1}{-\Big( (a+\!A_0)+1+(a+\!A_0)^{-1}\Big)}\right)^s \right|.
\end{gathered} \label{e23}
\end{equation}

Consider the function
$$
G(a) =  \frac{\Big( (a+\!A_0)+2+(a+\!A_0)^{-1}\Big)(a+\!A_0)^{-2}+1}{-\Big( (a+\!A_0)+1+(a+\!A_0)^{-1}\Big)} .
$$
Then Eq.(\ref{e23}) is equivalent to
$$
\Big| 1+ \Big( (a+\!A_0) + (a+\!A_0)^{-1} +1 \Big)  G(a)^{s} \Big| \\
= \Big| 1- G(a)^{s} \Big|
$$
and $|G(0)|=\Big| \frac{(A_0+2+A_0^{-1})A_0^{-2}+1}{A_0+1+A_0^{-1}} \Big| \leq 1$. Hence by Lemma \ref{lso}, for any positive real  number $\varepsilon > 0$ there exists $s_0$ such that for any $s\geq s_0$ the equation Eq.(\ref{e23}) has a solution in the disk $|a| < \varepsilon$. Thus, for $A_{0}$ there exists  $A'$ such that  $|A'-A_0|<\frac{\varepsilon}{2}$ and $A'$  satisfies the equation Eq.(\ref{e21}).

Since zeros of Eq.(\ref{e21}) are the limits of zeros of Yamada polynomials  $\{ R [C_{n}(\Theta_{s}(\infty_+))] \}_{n=1,s=1}^{\infty, \infty}$,  there exist integers $n>0$ and $s>0$ such that $R[C_{n}(\Theta_{s}(\infty_+))]$ has zero $A^{*}$ with $|A^{*}-A'|< \frac{\varepsilon}{2}$.

So there exists a zero $A^{*}$ of $R[C_{n}(\Theta_{s}(\infty_+))]$ with $|A^{*}-A_0|\leq \varepsilon$, where complex number $A_0$ satisfies $|A_0^{-3}+2A_0^{-2}+A_0^{-1}+1|\leq |A_0^{-1}+1+A_0|$.

By Proposition $6$ in \cite{YA}, the relation between the Yamada polynomials of a graph $g$ and its mirror image of a diagram $\widehat{g}$  is $R[g](A)=R[\widehat{g}](A^{-1})$.

Since $C_{n}(\Theta_{s}(\infty_-))$ is the mirror image of diagram $C_{n}(\Theta_{s}(\infty_+))$, the zeros of $R[C_{n}(\Theta_{s}(\infty_-))]$ are dense in the following region $\{z\in \mathbb{C} \, : \,  |z^{3}+2z^{2}+z^1+1|\leq |z^{-1}+1+z|\}$.

\end{proof}

Combining Theorem \ref{tmain} and Theorem \ref{tmain2}, we have

\begin{theorem}\label{tmain3}
Zeros of the Yamada polynomial of spatial graphs are dense in the following region:
$$
\Omega =
\big\{ z \in \mathbb C \, : \, |z+1+z^{-1}| \geq \min \{1, |z^{3}+2z^{2}+z+1|, |1 + z^{-1} + 2 z^{-2} + z^{-3} | \} \big\}.
$$
\end{theorem}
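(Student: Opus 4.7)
The plan is to recognize $\Omega$ as exactly the union of the three regions already shown to contain dense families of zeros of Yamada polynomials in Theorems~\ref{tmain} and~\ref{tmain2}. First I would unpack the defining inequality of $\Omega$. Since, for real quantities, $x\geq\min\{a,b,c\}$ is equivalent to the disjunction $x\geq a$ or $x\geq b$ or $x\geq c$, the condition $z\in\Omega$ is equivalent to at least one of
\begin{align*}
|z+1+z^{-1}| &\geq 1,\\
|z+1+z^{-1}| &\geq |z^{3}+2z^{2}+z+1|,\\
|z+1+z^{-1}| &\geq |1+z^{-1}+2z^{-2}+z^{-3}|
\end{align*}
holding.

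Next I would match each of these three conditions with one of the existing density results. The first is precisely the region of Theorem~\ref{tmain}, in which zeros of $\{R[C_{n}(\Theta_{s})]\}_{n,s}$ are dense. Using the identity $z+1+z^{-1}=z^{-1}+1+z$, the third inequality rewrites as $|z^{-3}+2z^{-2}+z^{-1}+1|\leq|z^{-1}+1+z|$, which is exactly the region of Theorem~\ref{tmain2} in which zeros of $\{R[C_{n}(\Theta_{s}(\infty_{+}))]\}_{n,s}$ are dense; similarly the second inequality is the mirror-image region where zeros of $\{R[C_{n}(\Theta_{s}(\infty_{-}))]\}_{n,s}$ are dense. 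Since $C_{n}(\Theta_{s})$, $C_{n}(\Theta_{s}(\infty_{+}))$, and $C_{n}(\Theta_{s}(\infty_{-}))$ are all spatial graphs, every point of $\Omega$ lies in the closure of zeros of Yamada polynomials of spatial graphs, and the claim follows as a union of three dense subsets.

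I do not expect a substantive obstacle: the real analytic work, invoking the Beraha--Kahane--Weiss criterion (Theorem~\ref{tbj}) together with Sokal's Lemma~\ref{lso}, has already been carried out inside the proofs of Theorems~\ref{tmain} and~\ref{tmain2}. The only points that require genuine attention are the purely set-theoretic translation between the $\min$-formulation of $\Omega$ and the disjunction of the three modulus inequalities, and the verification that the palindromic symmetry $z+1+z^{-1}=z^{-1}+1+z$, combined with the mirror-image relation $R[g](A)=R[\widehat{g}](A^{-1})$ already used in Theorem~\ref{tmain2}, correctly pairs the $\infty_{+}$ region with $|1+z^{-1}+2z^{-2}+z^{-3}|$ and the $\infty_{-}$ region with $|z^{3}+2z^{2}+z+1|$.
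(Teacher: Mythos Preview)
Your proposal is correct and matches the paper's approach exactly: the paper simply states that Theorem~\ref{tmain3} follows by ``combining Theorem~\ref{tmain} and Theorem~\ref{tmain2},'' which is precisely the decomposition of $\Omega$ via the disjunction $|z+1+z^{-1}|\geq\min\{a,b,c\}\Leftrightarrow(|z+1+z^{-1}|\geq a)\vee(|z+1+z^{-1}|\geq b)\vee(|z+1+z^{-1}|\geq c)$ that you spell out. Your write-up in fact supplies more detail than the paper itself, including the correct pairing of the $\infty_{\pm}$ regions with the two cubic expressions.
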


\section*{Acknowledgments}

The authors are thankful to Prof. Andrey Dobrynin for the helpful Maple's visualization of regions, discussed in the paper.

\bibliographystyle{amsplain}

\end{document}